\numberwithin{equation}{section}
\renewcommand{\section}%
   {\setcounter{equation}{0}\@startsection {section}{1}{\z@}{-3.5ex plus -1ex
  minus -.2ex}{2.3ex plus .2ex}{\Large\bf}}
\newtheorem{Th}{Theorem}[section]
\newtheorem{Rem}[Th]{Remark}
\newtheorem{Lemma}[Th]{Lemma}
\newtheorem{Def}[Th]{Definition}
\newtheorem{Prop}[Th]{Proposition}
\newtheorem{Cor}[Th]{Corollary}
\DeclareMathAlphabet{\pazocal}{OMS}{zplm}{m}{n}
\newcommand{\Pcal}{\pazocal{P}}
\def\R{\mathbb R}
\def\N{\mathbb N}
\def\sp{\hskip 0.5 pt}
\def\spp{\hskip 1pt}
\def\spi{\hskip 2pt}
\title{A Kirchhoff equation with infinite conservation laws}
\author[Boiti]{Chiara Boiti}
\address{
Dipartimento di Matematica e Informatica \\Universit\`a di Ferrara\\
Via Ma\-chia\-vel\-li n.~30\\
I-44121 Ferrara\\
Italy}
\email{chiara.boiti@unife.it}
\author[Manfrin]{Renato Manfrin}
\address{
Dipartimento di Culture del Progetto, Universit\`a IUAV di Venezia\\
Dorsoduro n.~2196, I-30123 Venezia, 
 Italy}
\email{manfrin@iuav.it}
\begin{document}

\subjclass[2020]{Primary: 35L65, 35L72; Secondary: 35L15}
\keywords{Kirchhoff equation, conservation laws, quasilinear hyperbolic equations.}

\begin{abstract}
We show here that the quasilinear Kirchhoff-Pokhozhaev equation 
$$u_{tt}-   \big ( a\int_{\R^n} |\nabla u |^2 dx + b \big )^{-2 }  \Delta u = 0\sp , 
$$ 
with $\sp  a\ne 0 \sp $, admits conservation laws of all orders. 
 \end{abstract}

\maketitle

\markboth{\sc  A Kirchhoff equation with infinite conservation laws}
 {\sc C.~Boiti, R.~Manfrin}


\section{Introduction and results} 

 We consider here the Kirchhoff\spp -\spp Pokhozhaev equation 
 \begin{equation}\label{KP}
  u_{tt}- \frac {\Delta u} {\big ( a\sp \| \nabla u \|^2+ b \big )^2 }  = 0 
  \quad \text{in} \quad  \R^n \times [0, T)  ,
   \end{equation}
 for some $\sp  T> 0\spp $, where $ \spp\|\nabla u \|^2 = \int_{\R^n} |\nabla u (x,t) |^2 \spp dx\spp $ 
  and $\sp a, \sp b \in \R \sp $, with $a\neq0$.

S.\spi I.\,Pokhozhaev proved in \cite{P1}, \cite{P3} that equation \sp \eqref{KP} \sp admits a second-order conservation law.
More precisely, let $\spp u \spp $ be  defined in $\sp \R^n \times [0,T)\sp$, for some $\sp T >0 \sp $, and such that
\begin{equation}\label{regol2}
  u \in C^j([0,T); H^{2-j}(\R^n))  \quad  \quad (j= 0,1) \sp ,
\end{equation}
and
\begin{equation} \label{reg2}
 \phantom{ \int_{\R}} q \,  =  \,   a \spp \|\nabla u \|^2 +b \ne 0 \quad \text{for} \quad t\in [0,T);\phantom{ \int_{\R}}
 \end{equation} 
if $\spp u \spp $ solves  \eqref{KP} in $\spp \R^n \times [0,T)$, then the second-order functional
\begin{equation}\label{CL2}
I_2(u) \, := \, q \sp  \| \nabla u_t\|^2 +
 \frac { \|\Delta u \|^2} 
{ q} 
 -   a \spp \left ( \int_{\R^n} \nabla u \cdot \! \nabla u_t \spp dx \right )^2
\end{equation}
remains constant in $\sp\sp  [0,T)$. See also \cite {BM} and \cite{PE} for other derivations of  \eqref{CL2}. 

 As shown first in \cite{P3} and then in \cite{PN} and \cite{PE}, the  conservation of the functional $\sp I_2 \sp $ 
 gives, under appropriate assumptions, the global solvability of the Cauchy problem (and also of the initial boundary value problem) for  equation \eqref{KP} when the initial data $\sp u(x,0)\sp$ and $ \sp u_t(x,0)\sp $ are in Sobolev spaces of suitable order.
 Note that the simple energy conservation (functional of the first order), valid for any 
  Kirchhoff-type equation \begin{equation} \label{KK}
 \, u_{tt} - m (\|\nabla u \|^2) \Delta u= 0 \quad  \text{with}\quad 
 m \in C^0 , \;\,  m > 0\sp ,
 \end{equation}
  does not seem to be sufficient to ensure global solvability. See \cite{AP}.

Recently, in \sp \cite{BM2}, \sp it was shown that \eqref{KP}  also admits a third-order conservation law if one assumes some additional regularity on the solution. Namely, if 
$\spp u \in C^j([0,T); H^{3-j}(\R^n))$, with $j\in\{0,1\}$, 
then the third-order functional
 \begin{equation}\label{CL3}
\begin{aligned}
I_3(u)\, &:=  \, q {\|\Delta u_t\|^2} + \frac{\|\nabla (\Delta u)\|^2} q - 
{q'} \!\int_{\R^n} \Delta u \sp\Delta u_t\spp dx 
 \\
 &\quad\; + \frac{1} 8\spp  {q'}^2 \spp 
  \left ( { q\| \nabla u_t\|^2}  +  \frac{\| \Delta u\|^2} {q}\right )
  -\frac{ 1 }{16\sp a}  \left (  \frac { {q'}^4} 4+ {q^2\spi {q''}^2} \right ) 
 \end{aligned}
 \end{equation}
 remains constant in $[0,T)$. Note that \eqref{CL3} depends on $\spp q, q'\spp$ and also  
 $\spp q''\sp $, which is given by
 \begin{equation*}
 q'' =  2a \left (\| \nabla u_t\|^2 - \frac {\| \Delta u \|^2} {q^2} \right ),
 \end{equation*}
 if  $\sp u \sp$ is a solution of \eqref{KP} that satisfies \eqref{regol2}, \eqref{reg2}.
 
The functional $I_3$ is of the third order, since it is the sum of a principal part
  $$
  \sp q {\|\Delta u_t\|^2} + \frac{\|\nabla (\Delta u)\|^2} q - 
{q'} \!\int_{\R^n} \Delta u \sp\Delta u_t\spp dx 
$$
that depends on third-order derivatives of $u$, and a functional
 $$
J(u) \spp = \spp \frac{1} 8\spp  {q'}^2 \spp 
  \left ( { q\| \nabla u_t\|^2}  +  \frac{\| \Delta u\|^2} {q}\right )
  -\frac{ 1 }{16\sp a}  \left (  \frac { {q'}^4} 4+ {q^2\spi {q''}^2} \right ) 
  $$
which contains only derivatives of $u$ up to the second order.

 \medskip

In this paper, we   show that  the Kirchhoff-Pokhozhaev equation \eqref{KP} has conservation laws  
(more precisely, time-invariant functionals) of any order  $ \sp k\in \N$, $\sp k \ge 3 \sp $,
provided that the considered solutions are sufficiently regular. 

\vskip 0.2cm

To be more precise, given a sufficiently regular solution of \eqref{KP}, which satisfies \eqref{reg2}, we define a functional of order $ \spp \le l$ according to the following:

\begin{Def}\label{lf}
Let $\sp l \in\N\sp$, $\sp l \ge 2\sp $. 
 We say that  $\sp J= J(u)\sp$ is a  functional   of order $\spp  \le l \spp \sp$ if  $\, J\,$ can be expressed in the form
   \begin{equation}
   \label{defju}
  \begin{aligned}
 J\, =  \, J(u)\,=\,P \Big  (\spp q^{-1}\sp , &\,\spp q \spp , \spp q', \spp  \dots ,\sp q^{(2l-2)}, \,\spp
  \|\nabla u \|^2, \dots , \spp \|\nabla^{l} u \|^2,\, \spp  \|\nabla u_t \|^2 , \dots , \|\nabla^{l-1} u_t \|^2  ,
 \\
 &\quad\quad  \int_{\R^n}  \nabla u  \cdot \! \nabla u_t  \spp dx \, \spp , \dots , \,
  \int_{\R^n}  \nabla^{l-1} u  \cdot \! \nabla^{l-1} u_t \spp dx \Big )
  \end{aligned}
  \end{equation}
where $\sp P \sp $ is a polynomial of its arguments and where, for $\sp i \in \N\sp$ and $\sp v \sp $ a given 
function, we have adopted the following convention:
$$
\nabla^{2i} v= \Delta^{ i} v \quad  \quad   \text{and} \quad  \quad 
 \nabla^{2 i+1}v=  \nabla (\Delta^{ i} v )\sp .
$$
\end{Def}



\begin{Rem} \label{lorder} 
To clarify Definition~\ref{lf}, we also note the following facts which can be easily proved.

Let $\sp l\in \N\sp,\sp l \ge 2\sp$. If
$$
  u \in C^j([0,T); H^{l-j}(\R^n))   \quad  \quad (j= 0,1) \sp ,
  $$
  is a solution of \eqref{KP} in $\spp \R^n \times [0,T)$ that
 satisfies condition \eqref{reg2}, then 
 
 \vskip 0.2cm
 \noindent
 $i )\phantom{ii)} q \sp $ is a $\sp  C^{2l-2}\sp $ function in $\sp [0,T)\sp$;
 
  \vskip 0.2cm
 \noindent
$ii)\phantom{i)}q', q'', \dots , q^{(2l-2)}\spp$ can be expressed as polynomials in 
$\spp q^{-1}\spp $, 
$\,
\|\nabla u \|^2$, $ \dots$, $\|\nabla^{l} u \|^2 
\, $,
$\,
\|\nabla u_t \|^2$, $\dots$, $\|\nabla^{l-1} u_t \|^2 
\, $ and
$$
\int_{\R^n}  \nabla u \! \cdot \! \nabla u_t  \, dx \,\,  , \dots , \,\,
  \int_{\R^n}  \nabla^{l-1} u \! \cdot \! \nabla^{l-1} u_t \, dx \spp .
  $$

  This justifies the expression \eqref{defju} of $J(u)$ as a functional of order $\leq l$, because it 
 depends on the derivatives of $\spp \sp u \sp $ of order at most $\spp l $.
\end{Rem}

Now we can state our main result:
\begin{Th}\label{T1}
 Let $k\in \N$, $k\ge 3\sp$. There exists a  functional $\, J_{k}= J_k(u)\sp $, of order $\sp \le k-1\sp $, 
such that, given a function
\begin{equation*}
  u \in C^j([0,T); H^{k-j}(\R^n))  \quad  \quad (j= 0,1) \sp ,
  \end{equation*}
which satisfies \eqref{reg2},  if  $\sp u \sp $  is a solution of \eqref{KP} then   
  \begin{equation}\label{Ik}
I_k (u)\sp =  \sp q {\|\nabla^{k-1} u_t\|^2} + \frac{\|\nabla^{k} u\|^2} q -
 {q'} \!\int_{\R^n} \nabla^{k-1} u  \cdot \!\nabla^{k-1} u_t\spp\sp dx \spp + J_{k}(u)
 \end{equation}
 remains constant in $\sp [0,T)\sp$.
\end{Th}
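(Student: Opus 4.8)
The plan is to reduce \eqref{KP} to a \emph{linear} wave equation with time–dependent speed, then split the computation into an elementary cancellation and a more delicate integration.

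\textbf{Step 1 (reduction).} Since $q=q(t)$ does not depend on $x$, applying $\nabla^{k-1}$ to \eqref{KP} shows that $w:=\nabla^{k-1}u$ solves $w_{tt}=q^{-2}\Delta w$, with $q$ now regarded as a given function of $t$. Writing $E:=q\|w_t\|^2+q^{-1}\|\nabla w\|^2=q\|\nabla^{k-1}u_t\|^2+q^{-1}\|\nabla^{k}u\|^2$ and $C:=\int_{\R^n}w\cdot w_t\,dx=\int_{\R^n}\nabla^{k-1}u\cdot\nabla^{k-1}u_t\,dx$, the principal part of \eqref{Ik} is exactly $E-q'C$ (here $\|\nabla w\|^2=\|\nabla^{k}u\|^2$ after one integration by parts). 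I would then differentiate $E-q'C$ in $t$, differentiating under the integral sign and integrating by parts in $x$; these operations are legitimate because $u\in C^{j}([0,T);H^{k-j}(\R^n))$.

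\textbf{Step 2 (the cancellation).} Using $w_{tt}=q^{-2}\Delta w$ one gets $\frac{d}{dt}\|w_t\|^2=-q^{-2}\frac{d}{dt}\|\nabla w\|^2$ and $\frac{d}{dt}C=\|w_t\|^2-q^{-2}\|\nabla w\|^2$, whence the two clean identities $\frac{d}{dt}E=q'\big(\|w_t\|^2-q^{-2}\|\nabla w\|^2\big)=q'\frac{d}{dt}C$. Consequently the top–order contributions cancel and $\frac{d}{dt}(E-q'C)=-q''C$. Since $C=\tfrac12\frac{d}{dt}\|\nabla^{k-1}u\|^2$, the theorem is equivalent to the following statement: there is a functional $J_k$ of order $\le k-1$ with
\[
\frac{d}{dt}J_k=\tfrac12\,q''\,\frac{d}{dt}\|\nabla^{k-1}u\|^2 .
\]
This is the only non-elementary point.

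\textbf{Step 3 (construction of $J_k$).} I would construct $J_k$ by a finite downward recursion on the differential level $j$ of the scalar quantities $\|\nabla^{j}u\|^2$, $\|\nabla^{j}u_t\|^2$ and $\int_{\R^n}\nabla^{j}u\cdot\nabla^{j}u_t\,dx$. Setting $J_k=\tfrac12 q''\|\nabla^{k-1}u\|^2+R_k$ gives $\frac{d}{dt}\big(E-q'C+\tfrac12 q''\|\nabla^{k-1}u\|^2\big)=\tfrac12 q'''\|\nabla^{k-1}u\|^2$, so it remains to integrate $\tfrac12 q'''\|\nabla^{k-1}u\|^2$. By Remark~\ref{lorder} each $q^{(m)}$ is a polynomial in $q^{-1}$ and in the \emph{lower}-level norms; for instance $q'''=-8a\,q^{-2}\!\int_{\R^n}\!\Delta u\,\Delta u_t\,dx+4a\,q^{-3}q'\|\Delta u\|^2$. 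When $k=3$ this turns the remainder into a perfect derivative, $\tfrac12 q'''\|\Delta u\|^2=-a\,\frac{d}{dt}\big(q^{-2}\|\Delta u\|^{4}\big)$, giving the explicit law with $J_3=\tfrac12 q''\|\Delta u\|^2+a\,q^{-2}\|\Delta u\|^{4}$, of order $2$. For general $k$ I would iterate the same mechanism using the three structural identities $\int_{\R^n}\nabla^{j}u\cdot\nabla^{j}u_t\,dx=\tfrac12\frac{d}{dt}\|\nabla^{j}u\|^2$, $\frac{d}{dt}\int_{\R^n}\nabla^{j}u\cdot\nabla^{j}u_t\,dx=\|\nabla^{j}u_t\|^2-q^{-2}\|\nabla^{j+1}u\|^2$, and $\frac{d}{dt}\big(q\|\nabla^{j}u_t\|^2+q^{-1}\|\nabla^{j+1}u\|^2\big)=q'\frac{d}{dt}\int_{\R^n}\nabla^{j}u\cdot\nabla^{j}u_t\,dx$, so as to rewrite each successive remainder as a total $t$-derivative plus a term whose highest norm has been lowered by one level, at the price of one more $t$-derivative on its $q$-coefficient.

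\textbf{Main obstacle.} The genuinely delicate part is the bookkeeping needed to keep \emph{every} functional produced along the recursion of order $\le k-1$, together with the proof that the procedure terminates. Each reduction lowers the level of the surviving norm by one but raises the order of its $q$-coefficient, and I must check that these two effects balance, so that no term of order exceeding $k-1$ is ever created, and that at the bottom level the remaining purely $q$-dependent expression is an exact $t$-derivative of an admissible functional. Note that straightforward integration by parts in $t$ is circular here — it simply returns $\tfrac12 q''\frac{d}{dt}\|\nabla^{k-1}u\|^2$ — so this closing step must use the relations of Remark~\ref{lorder} in an essential way. Controlling this finite but intricate cancellation, rather than the differentiations of Steps~1--2, is where the real work lies.
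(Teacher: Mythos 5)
Your Steps 1--2 are correct, and they are exactly the physical-space version of the paper's $i=0$ computation: with $\alpha_0=q$, $\beta_0=-q'$ the paper's quadratic form \eqref{Q} integrated in $\xi$ is your $E-q'C$, and its derivative is your $-q''C$. Your $k=3$ case is also correct: one can check that your $J_3=\tfrac12 q''\|\Delta u\|^2+a\,q^{-2}\|\Delta u\|^4$ equals $a\|\nabla u_t\|^2\|\Delta u\|^2$, and that the resulting conserved functional differs from the paper's \eqref{CL3} exactly by $-\tfrac a4 I_2^2$, i.e.\ by a function of the conserved $I_2$, so both are legitimate. The genuine gap is Step 3 for $k\ge 4$. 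You reduce the theorem to showing that $\tfrac12 q'''\|\nabla^{k-1}u\|^2$ is the exact $t$-derivative of a functional of order $\le k-1$, you assert that a level-lowering recursion should achieve this, and you then explicitly concede that the termination of that recursion and the preservation of the order bound are unproved (``where the real work lies''). That concession \emph{is} the theorem: everything before it is elementary, and the whole content of the paper (Sections 2--3) consists precisely in making such a recursion close. Worse, the mechanism you describe does not even have a well-defined next step when $k\ge4$: substituting $q'''=-8a\,q^{-2}\!\int_{\R^n}\Delta u\,\Delta u_t\,dx+4a\,q^{-3}q'\|\Delta u\|^2$ creates cross-level products such as $\bigl(\int_{\R^n}\Delta u\,\Delta u_t\,dx\bigr)\|\nabla^{k-1}u\|^2$, and none of your three structural identities (all of which relate quantities of one and the same level $j$) applies to a product of level-$2$ and level-$(k-1)$ quantities; integrating by parts in $t$ instead is, as you yourself note, circular.

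What is missing is the algebraic mechanism that closes the iteration. The paper supplies it by carrying correction terms at \emph{every} intermediate level $i=1,\dots,k-2$ simultaneously (the full form \eqref{Q}), with coefficients $\alpha_i,\beta_i,\gamma_i$ determined by the ODE systems \eqref{S2bis}; the crux is then that the indefinite integrations forced by \eqref{Siter4} never leave the class of polynomials in $q,q',q'',\dots$ of controlled order. This is Theorem~\ref{Poly} together with Lemma~\ref{clm1}, proved via the functions $G_i$ of Definition~\ref{defGG}, the recursions of Lemmas~\ref{recu0}, \ref{recu3}, \ref{recu4}, and a ``meet-in-the-middle'' iterated integration by parts in which $\int G_1G_j'''\,dt$ is reduced, after roughly $j/2$ steps, to an elementary integral of the form \eqref{pas3bis} or \eqref{pas6}; Lemma~\ref{primit} then converts the surviving term $\beta_{k-2}'q'$ into an exact derivative of an admissible polynomial $Q_k$. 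Nothing in your sketch substitutes for this argument, and without it your claim that the procedure produces a $J_k$ of order $\le k-1$ --- or terminates at all --- is unsupported.
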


\vskip 0.2cm

Note that the principal part of $\sp I_k\sp$
$$
\,  q {\|\nabla^{k-1} u_t\|^2} + \frac{\|\nabla^{k} u\|^2} q 
-{q'} \!\int_{\R^n} \nabla^{k-1} u  \cdot \!\nabla^{k-1} u_t\spp dx 
$$ 
depends on  the $\sp k$-order derivatives 
$\sp \nabla^{k} u\sp $ and $\spp \nabla^{k-1}u_t\sp $ while, by Remark~\ref{lorder},  the functional 
$\sp J_{k}\sp $ depends only on lower order derivatives of the solution $\spp u\spp$, that is
$$
 \nabla^j u \; \, \sp \text{for} \; \,  1\le j \le k-1\sp 
 \quad \text{and}
 \quad   \nabla^{j} u_t 
 \; \, \sp \text{for}
 \; \,
  1 \le j \le k-2.
 $$

Finally, we recall that in \cite{P1} S.\spi I.\,Pokhozhaev also proved the remarkable fact that \eqref{KP} is the unique case 
where the Kirchhoff equation \eqref{KK} admits a second order conservation law. In other words, if  \eqref{KK} 
has a second order conservation law, then
\begin{equation}
m(s) = \frac 1 {(as+b)^2}
\end{equation}
for some $\sp a, b \in \R\sp$ not both zero. 
For this reason, it is likely that among the Kirchhoff-type equations \eqref{KK} only \eqref{KP} admits conservation laws of order higher than the second.


\vskip 0.3cm
\section{A quadratic form for a Liouville type equation} \label{LL}


In this section we consider the Liouville-type (cf. \cite[\S\sp 6.2]{MU}) differential equation
 \begin{equation}\label{lin2}
w_{tt} +\frac 1{q^2}\sp |\xi |^2\sp  w=0 \quad \text{for}  \quad  t\in [0,T),
\end{equation}
where $\xi \in \R^n\sp$  and $\, q=q(t): [0,T) \rightarrow \R \, $ is  a sufficiently regular function such that 
\begin{equation}  \label{qno}
q(t) \ne 0 \quad \text{in} \quad [0,T).
\end{equation}
This is obtained from equation \eqref{KP} by partial Fourier transform with respect to the $x$-variable.

Given a complex-valued solution of \eqref{lin2}, say
$$
w= w(\xi, t) \sp,
$$ 
we introduce, as in  \cite[\S\sp 2]{M1}, the following  set of quadratic forms: 
 \begin{Def} \label{Qdef} Given  $\sp k\in\N\sp $, $\spp k \ge 3\sp $,  let
\begin{equation}
 \begin{aligned}\label{Q}
{\pazocal E}_k(\xi, t)\sp  := \spp &\sum_{i=0}^{k-2} \alpha_i \sp |\xi|^{2k-2i-2} \left(|w_t|^2  + \frac 1{q^2}|\xi|^2  |w|^2\right ) 
+\sum_{i=0}^{k-2} \beta_i \sp |\xi|^{2k-2i-2}  \Re  (\overline{ w} \sp w_t)
\\
& \quad \quad + \sum_{i=0}^{k-3} \gamma_i \sp  |\xi|^{2k -2i-4} |w_t|^2,
 \end{aligned}
\end{equation}
where  $ \alpha_i=\alpha_i(t) , \beta_i=\beta_i(t), \gamma_i =\gamma_i(t)$ 
are suitable  $C^1$ real  functions in $[0,T)$. 
\end{Def}

 We will 
 determine $\spi \alpha_0, \beta_0, \gamma_0  \spi$, $\dots $, $\sp \alpha_{k-3}, \beta_{k-3}, \gamma_{k-3}\spi $ and, finally,  $\spi \alpha_{k-2}, \beta_{k-2} \spi$ by equating successively to zero, from highest to lowest, powers of $\sp |\xi|\sp $ in 
the expression of $\spi \frac d{dt}{\pazocal E}_k(\xi, t)$. Our aim is to obtain
\begin{equation} \label{derek0}
{d\over dt} {\pazocal E}_k(\xi, t)=  \beta_{k-2}' |\xi|^2\Re (\overline{w} w_t) \quad \text{for} \quad t\in [0,T).
\end{equation} 

We shall first establish the system of conditions that $\spi \alpha_i, \beta_i, \gamma_i \spi $ must satisfy, and then prove that the system is always solvable.

\hskip 1mm

$\quad 1)$  To begin with, we consider the terms of \eqref{Q} with $i=0$. Since $\spp w \spp $ is a solution of \eqref{lin2}, 
deriving with respect to $\sp t\sp$ we easily find:
\begin{equation} \label{der1}
 \begin{aligned}
\frac d{dt} &\left [\alpha_0 |\xi|^{2k-2} \left (|w_t|^2  + \frac 1{q^2} |\xi|^2  |w|^2\right ) 
+ \beta_0 |\xi|^{2k-2} \Re  (\overline{ w}\sp w_t) + \gamma_0  |\xi|^{2k-4} |w_t|^2 \right ]
\\
&\quad =\,\left [\left ( \frac{\alpha_0}{q^2} \right )'- \frac {\beta_0} {q^2}\right ]|\xi|^{2k} |w|^2 + \left [ \alpha_0' + \beta_0\right ]|\xi|^{2k-2} |w_t|^2  
\\
&\quad \quad \quad \quad  + 
\left [\beta_0' - 2 \frac{\gamma_0}{q^2}\right ] |\xi|^{2k-2} \Re (\overline{w}\sp w_t) +
 \gamma_0' |\xi|^{2k-4} |w_t|^2.
\end{aligned}
\end{equation}
We then require that $\alpha_0, \beta_0, \gamma_0$ satisfy the conditions
\begin{equation} \label{S1}
{\begin{cases}
\displaystyle{\left ( \frac{\alpha_0}{q^2} \right )'- \frac {\beta_0} {q^2}=0}
 \\
 \displaystyle{\alpha_0' + \beta_0 =0 \phantom{\int^8_8} }
 \\
 \displaystyle{\beta_0' - 2 \frac{\gamma_0}{q^2} =0\sp.}
 \end{cases}}
 \end{equation}
In this way the derivative \eqref{der1} reduces to $\,  \gamma_0' |\xi|^{2k-4} |w_t|^2\,$.

\hskip 1mm

$\quad 2)$ If $\sp k \ge 4 \sp $ (so that $k-3 \ge 1$), we consider also the derivatives of the terms  of 
\eqref{Q} with 
index $\sp i \sp $ such that $ 1\le i \le k-3$. We have:
 \begin{equation}  \label{der2}
  \begin{aligned}
\frac d{dt} &\left [\alpha_i |\xi|^{2k -2i-2} \left ( |w_t|^2  + \frac 1{q^2} |\xi|^2  |w|^2\right ) 
+ \beta_i |\xi|^{2k -2i-2}  \Re  (\overline{ w}\sp w_t) + \gamma_i  |\xi|^{2k -2i-4} |w_t|^2 \right ]
\\
&\quad =\, \left [\left ( \frac{\alpha_i}{q^2} \right )'- \frac {\beta_i} {q^2}\right ]
|\xi|^{2k -2i} |w|^2 + \left [ \alpha_i' + \beta_i\right ]|\xi|^{2k -2i-2} |w_t|^2
\\
& \quad \quad \quad \quad + 
\left [\beta_i' - 2 \frac{\gamma_i}{q^2}\right ]  |\xi|^{2k-2i} \Re (\overline{w}\sp w_t)
 + \gamma_i' |\xi|^{2k-2i- 4} |w_t|^2,
\end{aligned}
\end{equation}
for $ 1\le i \le k-3$. 
Therefore, if $k \ge 4$, we also impose that
\begin{equation}\label {S2}
 {\begin{cases}
\displaystyle{\left ( \frac{\alpha_i}{q^2} \right )'- \frac {\beta_i} {q^2}=0}
 \\
 \displaystyle{\alpha_i' + \beta_i = - \gamma_{i-1}' \phantom{\int^8_8} }
 \\
 \displaystyle{\beta_i' - 2 \frac{\gamma_i}{q^2} =0}
 \end{cases}} \quad \text{for} \quad 1\le i \le k-3,
 \end{equation}
 with $\sp \gamma_0\sp$ given by the system \eqref{S1}.
  
  If the conditions of  \eqref{S1} and \eqref{S2} are satisfied,  the sum of the derivatives of 
  the terms of \eqref{Q} with $\sp 0\le i \le k-3\sp$  simply
 amounts to  $\,  \gamma_{k-3}' |\xi|^{2} |w_t|^2\,$.

\hskip 1mm 

$\quad 3)$  Finally, we consider the terms of \eqref{Q} with  $ \sp i = k-2 \sp $.  In this case we find:
 \begin{equation}  \label{der3}
  \begin{aligned}
\frac d{dt} &\left [\alpha_{k-2} |\xi|^{2} \left ( |w_t|^2  + \frac 1{q^2} |\xi|^2  |w|^2\right ) 
+ \beta_{k-2} |\xi|^{2}  \Re  (\overline{ w}\sp w_t) \right ]
\\
&=\left [\left ( \frac{\alpha_{k-2}}{q^2} \right )'- \frac {\beta_{k-2}} {q^2}\right ]
|\xi|^{4} |w|^2 + \left [ \alpha_{k-2}' + \beta_{k-2}\right ]|\xi|^{2} |w_t|^2 + 
\beta_{k-2}'   |\xi|^{2} \Re (\overline{w}\sp w_t).
\end{aligned}
\end{equation}
Hence, we require 
\begin{equation}\label {S3}
 {\begin{cases}
\displaystyle{\left ( \frac{\alpha_{k-2}}{q^2} \right )'- \frac {\beta_{k-2}} {q^2}=0}
 \\
 \displaystyle{\alpha_{k-2}' + \beta_{k-2} = - \gamma_{k-3}' \,.\phantom{\int^8_8} }
  \end{cases}}
 \end{equation}

  Summarizing up, if the conditions of \eqref{S1}, \eqref{S2} and \eqref{S3} are satisfied, then  the derivative of  $\spp  {\pazocal E}_k(\xi, t)\spp$ is simply
 $\spp \sp \beta_{k-2}'   |\xi|^{2} \Re (\overline{w}\sp w_t)\spp$ as stated in \eqref{derek0}.

 Let us now remark that if we add the equation
 $$ 
 \beta_{k-2}'- 2 \frac{\gamma_{k-2}}{q^2}=0
 $$
 to \eqref{S3}, and set 
 \begin{equation} \label{g-1}
 \gamma_{-1}\equiv 0 \sp , 
 \end{equation} 
 we can summarize
  conditions \eqref{S1}, \eqref{S2} and \eqref{S3} into the following system:
    \begin{equation}\label {S2bis}
 {\begin{cases}
\displaystyle{\left ( \frac{\alpha_i}{q^2} \right )'- \frac {\beta_i} {q^2}=0}
 \\
 \displaystyle{\alpha_i' + \beta_i = - \gamma_{i-1}' \phantom{\int^8_8} }
 \\
 \displaystyle{\beta_i' - 2 \frac{\gamma_i}{q^2} =0}
 \end{cases}} \quad \text{for} \quad 0\le i \le k-2.
 \end{equation}
In this way we have introduced the auxiliary functions $\gamma_{-1}$ and $\gamma_{k-2}$
which are not used in the definition \eqref{Q} of the quadratic form ${\pazocal E}_k$, but we 
can now work with a single system instead of three separate ones. Moreover, it is  clear from \eqref{S2bis} that only the length of the sequence
 $$
  \alpha_0, \beta_0, \gamma_0\spp\sp, \dots, 
   \alpha_{k-2}, \beta_{k-2}, \gamma_{k-2}
  $$
    depends directly on the integer $\sp k\ge 3 \sp $. In other words, for $\sp 0 \le i \le k-2\sp $, the concrete expression of the elements 
    $\alpha_i$, $\beta_i$ and $ \gamma_i$  depends only on $\sp q\sp$ and $\sp i \sp$.

To determine  the functions $\alpha_i$, $\beta_i$ and $\gamma_i$, we note  that the first two equations of \eqref{S2bis} give $\sp \beta_i = - \gamma_{i-1}' - \alpha_i'\sp $ and hence
\begin{equation*}
\left ( \frac{\alpha_i}{q^2} \right )'- \frac {\beta_i} {q^2} = 
\frac 2 q  \left (\frac {\alpha_i} q \right )' + \frac{\gamma_{i-1}'} {q^2} 
\quad \text{for} \quad  0\leq i\leq k-2\sp ,
\end{equation*}
so that system \eqref{S2bis} can be written as
\begin{equation}\label {Siter}
\gamma_{-1} \equiv 0, \quad\quad  
{\begin{cases}
\displaystyle{\left ( \frac{\alpha_i}{q} \right )' = - \frac {\gamma'_{i-1}} {2\sp \sp  q}}
 \\
 \displaystyle{ \beta_i = - \gamma_{i-1}' - \alpha_i'\phantom{\int^8_8} }
 \\
 \displaystyle{\gamma_i=  \frac 1 2 \sp\sp q^2\sp \beta_i'}
 \end{cases}} \quad \text{for} \quad 0\le i \le k-2 \sp .
 \end{equation}
It is therefore easy to see that  system \eqref{S2bis} is  solvable if we assume, together  with \eqref{qno},
\begin{equation} \label{regular}
 q \in C^{2k-2} ([0,T)) \sp .
\end{equation}
Under this assumption,  we also have  $\sp \alpha_i \in C^{2k-2i-2} ([0,T))\sp$, $\sp \beta_i \in C^{2k-2i-3} ([0,T))\sp$ and 
$\sp \gamma_i \in C^{2k-2i-4} ([0,T))\sp$, for $\sp 0 \le i \le k-2\sp$.  
In particular, if \eqref{regular} holds,
$\spi \alpha_0, \beta_0, \gamma_0 \spi $, 
   $\dots $, $\spi \alpha_{k-3}, \beta_{k-3}, \gamma_{k-3}\spi $ and   $\spi \alpha_{k-2}, \beta_{k-2} \spi$ are 
   $\sp C^1\sp$ functions in $\sp [0,T) \sp $, as required in Definition~\ref{Qdef}. 
   
   Furthermore, integrating \eqref{Siter}
we find
\begin{equation}\label {Siter4} 
\gamma_{-1}\equiv 0, \quad \quad\begin{cases}
\displaystyle{
\alpha_i= -  q  \int  \frac{\gamma_{i-1}'}{ 2\sp q }\sp dt}
 \\
 \displaystyle{ \beta_i = -  \left (\gamma_{i-1}-   q \int  \frac{\gamma_{i-1}'} {2\sp q} \sp dt   \right )'
 \phantom{\int^8_8} }
 \\
 \displaystyle{\gamma_i=  -\frac 1 2 \sp\sp q^2\sp 
  \left (\gamma_{i-1}-   q  \int  \frac{\gamma_{i-1}'} {2\sp q} \sp dt   \right )''}
 \end{cases} \quad \text{for} \quad 0\le i \le k-2,
 \end{equation}
where in each of the three expressions of \eqref{Siter4}, for $\sp 0 \le i \le k-2\sp$, we must take the same primitive of  the function $\spp  \frac {\gamma_{i-1}'} {2\sp q}\sp $. 

Finally, from \eqref{Siter4}, it is immediately clear that  the functions 
$\alpha_0, \beta_0$ and $\gamma_0$ depend linearly on an arbitrary   constant of integration 
 $\sp C_0\in \R\sp $;
$\alpha_1, \beta_1, \gamma_1$ depend linearly on $C_0$ and on an additional arbitrary constant of integration $\sp C_1\in \R\sp$; in
general, for $ \sp 1 \le i \le k-2\sp $, the functions  $\alpha_i, \beta_i, \gamma_i$  depend linearly 
  on the previous 
 $\sp i\sp $ arbitrary constants of integration $\spp C_0\spp , \dots, C_{i-1} \in \R\spp $ and  on an additional arbitrary constant of integration $\spp C_i \in \R$.

\medskip
In conclusion, we have the following:
\begin{Prop} \label{prop1}
Let $\sp k \in \N\sp $, $\sp k\ge 3 \sp $. Besides, let us suppose 
\begin{equation} \label{condq}
q=q(t)\in C^{2k-2}([0,T)), \quad q(t) \ne 0 \quad  \text{in}  \quad [0,T).
\end{equation} 
Then, solving the linear systems \eqref{S2bis}, for $\sp 0 \le i \le k-2\sp$,  we obtain functions 
$\alpha_0,\sp  \beta_0 ,  \sp  \gamma_0  $, $\!\dots $, $\alpha_{k-2},\sp  \beta_{k-2},\sp  \gamma_{k-2}$ depending linearly on $\sp k-1 $ arbitrary constants of integration $\sp C_0 \spp , \dots , C_{k-2} \in \R$ and such that
\begin{equation}\label{rego}
\begin{cases}
\alpha_i(t)= \alpha_i (C_0 \spp , \dots , C_i, t) \in C^{2k -2i-2}([0,T))
\\
\displaystyle{\beta_i(t)= \beta_i (C_0 \spp , \dots , C_i, t) \in C^{2k -2i-3}([0,T))  \phantom{\int^8_8}   }
\\
\gamma_i (t)= \gamma_i(C_0 \spp , \dots , C_i, t)\in C^{2k -2i-4}([0,T))
\end{cases}
\end{equation} 
for $\spp 0 \le i \le k-2\sp$. Furthermore, taking into account \eqref{der1}, \eqref{der2} and \eqref{der3}, we have the equality
\begin{equation} \label{derek}
{d\over dt} {\pazocal E}_k(\xi, t)=  \beta_{k-2}' |\xi|^2\Re (\overline{w} w_t) \quad \text{for} \quad t\in [0,T).
\end{equation} 
\end{Prop}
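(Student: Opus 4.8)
The plan is to establish the three assertions of the proposition — solvability with exactly $k-1$ free constants, the regularity \eqref{rego}, and the derivative identity \eqref{derek} — by a single induction on the index $i$ running from $0$ to $k-2$, using the recursive form \eqref{Siter} of the system \eqref{S2bis}. At each stage the first line of \eqref{Siter} determines $\alpha_i$ by one integration, which is legitimate because \eqref{qno} makes $\gamma_{i-1}'/(2q)$ a well-defined function; this integration introduces exactly one new constant $C_i\in\R$, and the second and third lines then fix $\beta_i$ and $\gamma_i$ by differentiation. The closed formulas \eqref{Siter4} record the solution explicitly, and the convention \eqref{g-1}, namely $\gamma_{-1}\equiv 0$, supplies the base case.

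First I would verify the base case $i=0$. With $\gamma_{-1}\equiv 0$ the first equation of \eqref{Siter} reads $(\alpha_0/q)'=0$, so $\alpha_0=C_0 q$ for an arbitrary $C_0$; then $\beta_0=-\alpha_0'=-C_0 q'$ and $\gamma_0=\tfrac12 q^2\beta_0'=-\tfrac12 C_0 q^2 q''$. These depend linearly on the single constant $C_0$ and, since $q\in C^{2k-2}$ by \eqref{condq}, lie in $C^{2k-2}$, $C^{2k-3}$, $C^{2k-4}$ respectively, matching \eqref{rego} for $i=0$. For the inductive step I would assume $\gamma_{i-1}$ already constructed, linear in $C_0,\dots,C_{i-1}$ and of class $C^{2k-2i-2}$. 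The primitive appearing in the $\alpha_i$-formula of \eqref{Siter4} is then of class $C^{2k-2i-2}$ and is fixed up to the new additive constant $C_i$, so that $\alpha_i=-q\int\gamma_{i-1}'/(2q)\,dt$ is linear in $C_0,\dots,C_i$ and, after multiplication by $q$, again of class $C^{2k-2i-2}$; one differentiation gives $\beta_i\in C^{2k-2i-3}$, and a further differentiation together with multiplication by $q^2$ gives $\gamma_i\in C^{2k-2i-4}$, all linear in $C_0,\dots,C_i$. This closes the induction and yields \eqref{rego}, the total number of constants being $C_0,\dots,C_{k-2}$, i.e.\ $k-1$.

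The step I expect to require the most care is precisely this regularity bookkeeping: one must track how integration raises, and each differentiation lowers, the differentiability index, and how multiplication by $q$ or $q^2$ (which caps the index at $2k-2$) interacts with these, so as to confirm the exact exponents in \eqref{rego}. In particular one should check that all functions actually entering \eqref{Q} — namely $\alpha_i,\beta_i$ for $0\le i\le k-2$ and $\gamma_i$ for $0\le i\le k-3$ — are at least $C^1$, as Definition~\ref{Qdef} demands: the least regular among them are $\alpha_{k-2}\in C^2$, $\gamma_{k-3}\in C^2$ and $\beta_{k-2}\in C^1$, so the requirement is met, and in particular $\beta_{k-2}'$ in \eqref{derek} is continuous.

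Finally, the derivative identity \eqref{derek} requires no new computation: the constructed functions satisfy \eqref{S2bis} for $0\le i\le k-2$, hence in particular the conditions \eqref{S1}, \eqref{S2} and \eqref{S3}. Summing the already-computed derivatives \eqref{der1}, \eqref{der2} and \eqref{der3} over the relevant ranges of $i$ and invoking these conditions, every contribution cancels telescopically except $\beta_{k-2}'\,|\xi|^2\,\Re(\overline{w}w_t)$, which is exactly \eqref{derek}. The extra equation $\beta_{k-2}'-2\gamma_{k-2}/q^2=0$ folded into \eqref{S2bis} only serves to define the auxiliary function $\gamma_{k-2}$, which does not appear in \eqref{Q} and plays no role in the cancellation.
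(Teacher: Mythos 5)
Your proof is correct and follows essentially the same route as the paper: the paper likewise solves \eqref{S2bis} via the recursive form \eqref{Siter}--\eqref{Siter4} (one integration per index $i$, introducing the constants $C_0,\dots,C_{k-2}$), reads off the regularity \eqref{rego} from these formulas, and obtains \eqref{derek} by summing \eqref{der1}, \eqref{der2}, \eqref{der3} under the cancellation conditions. Your explicit induction and the $C^1$ check for Definition~\ref{Qdef} merely make precise what the paper states more briefly, so there is nothing to add.
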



\vskip 0.3cm
\section {The polynomial structure of  $\spp\alpha_i, \beta_i, \gamma_i$.}  \label{polyst}

Under the assumptions of Proposition~\ref{prop1},  in this section we will prove that 
the functions $\sp \alpha_i, \beta_i, \gamma_i \sp $  are 
 polynomials in $\sp q \sp$ and its derivatives $\sp q^{(h) }$ of order
 $\sp h\le 2i \sp $, $\sp h\le 2i + 1\sp $ and $\sp h\le 2i +2  \sp $, respectively. To simplify the explanation, 
below we will set
\begin{equation} \label{int0}
\int \frac {\gamma_{-1}'} {2\sp q} \sp dt=-\sp C_0 \sp ,
\end{equation}
with $\sp C_0 \in \R\sp $, so that from \eqref{Siter4} with $\sp i =0\sp $  we find
\begin{equation} \label{c0}
\alpha_0 = C_0 \sp q, \quad \beta_0 = - C_0 \sp q' , \quad \gamma_0 = - \frac  {C_0} 2 \sp q^2 \sp q'',
\end{equation}
and  we will also require
\begin{equation} \label{non0}
C_0 \ne 0\sp ,
\end{equation} 
so that  $\alpha_0, \beta_0$ and $\gamma_0$ are not identically zero.

Assuming  $\sp k \in \N$, $\sp k \ge 3\sp$ and $\sp q \sp $ as in \eqref{condq}, we use $ \gamma_{-1} \equiv 0, \gamma_0 \spp , \dots, \gamma_{k-3}\sp$ to introduce the following functions:
\begin{Def} \label{defGG}
Let $\sp k \in \N$, $\sp k \ge 3\sp$. We define:
 \begin{equation} \label{defG} 
 G_i \sp : =  \sp \gamma_{i-1} -  q  \int \frac {\gamma_{i-1}' } {2\sp q}\sp \sp dt \sp ,
 \qquad 0\leq i\leq k-2 \sp .
  \end{equation}
  \end{Def}
 For \eqref{int0} and \eqref{non0}, we then have 
 \begin{equation}
  \label{G00}
 G_0 = C_0 \sp q  \quad \text{with} \quad C_0 \ne 0\sp , 
 \end{equation}
and we can write \eqref{Siter4} as
\begin{equation}\label {Siter4bis} 
\begin{cases}
\displaystyle{
\alpha_i= G_i - \gamma_{i-1} }
 \\
 \displaystyle{ \beta_i = -\sp  G_i'
 \phantom{\int^8_8} }
 \\
 {\gamma_i=  - \sp \frac 1 2 \sp\sp q^2\sp 
  G_i''}
 \end{cases} \quad \text{for} \quad 0\le i \le k-2.
 \end{equation}
 \begin{Rem} \label{Greg} 
  Having assumed $\sp q\in  C^{2k-2}([0,T))\sp $, from \eqref{G00} it is clear that $\sp G_0 \in C^{2k-2}([0,T))\sp $. Furthermore, by Proposition~\ref{prop1}, we also know that $\sp \gamma_{i-1} \sp$ is $\sp \in C^{2k- 2i-2}([0,T)) \sp $, 
 for $\sp 1\le i \le k-2$. Therefore, from Definition~\ref{defGG}, we deduce that $\sp G_i \in C^{2k-2i-2}([0,T))\sp $, for 
 $ \sp 0 \le i \le k-2\sp$.
 \end{Rem}

 \vskip 0.1cm
 We now need some technical lemmas to
 prove that every  $\sp G_i \sp $, for  $\sp 0\le i  \le k-2\sp$,  is  a polynomial in $\sp q\sp$ 
 and its derivatives $\sp q^{(h)}\sp $ of order $\sp h \le 2i\sp$. To begin with, we show that  $\spp G_1 \spp , \dots , G_{k-2}\spp$ can be obtained recursively from $\sp G_0\sp$:

\begin{Lemma} \label{recu0} Let $\sp k \in \N$, $\sp k \ge 3\sp$. We have
\begin{equation} \label{rec}
G_i=  - \frac q 4  \int q\sp \sp  G_{i-1}''' \sp \sp dt \, , \qquad 1\le i \le k-2\sp.
\end{equation}
In particular,
\begin{equation}
  \label{Gi/qprimo}
\left (\frac {G_{i}} q\right ) '  = - \frac 1 4 \sp \sp q \sp \sp  G_{i-1}''' \sp , \qquad 1\le i \le k-2\sp.
 \end{equation}
\end{Lemma}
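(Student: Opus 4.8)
The plan is to sidestep the indefinite integral appearing in Definition~\ref{defGG} by first proving the differentiated identity \eqref{Gi/qprimo} and then integrating it to obtain \eqref{rec}. Only two ingredients are needed: the defining relation $G_i = \gamma_{i-1} - q\int \frac{\gamma_{i-1}'}{2q}\,dt$ of \eqref{defG}, and the third equation of \eqref{Siter4bis} read at index $i-1$, namely $\gamma_{i-1} = -\frac12 q^2 G_{i-1}''$, which is available for $1\le i \le k-2$ (so that $0 \le i-1 \le k-3$). The third derivative $G_{i-1}'''$ makes sense thanks to Remark~\ref{Greg}: for $1 \le i \le k-2$ one has $G_{i-1} \in C^{2k-2i}([0,T))$ with $2k-2i \ge 4$.

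First I would divide the definition of $G_i$ by $q$ and differentiate, which eliminates the primitive and yields
\begin{equation*}
\left(\frac{G_i}{q}\right)' = \left(\frac{\gamma_{i-1}}{q}\right)' - \frac{\gamma_{i-1}'}{2q} = \frac{\gamma_{i-1}'}{2q} - \frac{\gamma_{i-1}\,q'}{q^2}.
\end{equation*}
The right-hand side still involves $\gamma_{i-1}$, so the next step is to substitute $\gamma_{i-1} = -\frac12 q^2 G_{i-1}''$ together with its derivative $\gamma_{i-1}' = -q\,q'\,G_{i-1}'' - \frac12 q^2 G_{i-1}'''$.

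The crux is then a cancellation: inserting these two expressions, the term $\frac{\gamma_{i-1}'}{2q}$ contributes $-\frac{q'}{2}G_{i-1}'' - \frac{q}{4}G_{i-1}'''$, while $-\frac{\gamma_{i-1} q'}{q^2}$ contributes $+\frac{q'}{2}G_{i-1}''$; the two multiples of $G_{i-1}''$ annihilate, leaving exactly
\begin{equation*}
\left(\frac{G_i}{q}\right)' = -\frac14\, q\, G_{i-1}''',
\end{equation*}
which is \eqref{Gi/qprimo}. Multiplying by $q$ and integrating then gives $G_i = -\frac{q}{4}\int q\,G_{i-1}'''\,dt$, i.e. \eqref{rec}. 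I expect the main obstacle to be precisely this cancellation of the $G_{i-1}''$ terms, which is what collapses the recursion onto a single clean third derivative, together with the bookkeeping of the constant of integration: the primitive in \eqref{rec} must be chosen consistently with the one already fixed in \eqref{defG} (the constant in \eqref{defG} enters $G_i/q$ additively and hence disappears under differentiation, so \eqref{Gi/qprimo} is unambiguous, whereas \eqref{rec} inherits the same integration freedom), so that the two formulas are genuinely equivalent.
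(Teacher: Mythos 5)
Your proof is correct, and it runs the paper's argument in the reverse direction while using exactly the same two ingredients. The paper also starts from $\gamma_{i-1}=-\frac12\,q^2G_{i-1}''$, but it works at the level of primitives: it computes $\gamma_{i-1}'=-q\,q'G_{i-1}''-\frac12 q^2G_{i-1}'''$, rewrites this as $-q\,(q\,G_{i-1}'')'+\frac12 q^2G_{i-1}'''$ (the same cancellation of the $q'G_{i-1}''$ terms that you exploit, packaged as an integration by parts), divides by $-2q$, integrates, multiplies by $q$, and invokes \eqref{defG} to land on \eqref{rec} first; the differential identity \eqref{Gi/qprimo} is then read off as a corollary. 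You instead divide \eqref{defG} by $q$ and differentiate at the outset, which eliminates the indefinite integral immediately, establish \eqref{Gi/qprimo} as an unambiguous pointwise identity, and only then integrate back to recover \eqref{rec}. What your ordering buys is cleaner bookkeeping: every manipulation is performed on derivatives, where constants of integration cannot intrude, and the single constant ambiguity is confronted once, at the end — a point you correctly flag and resolve. What the paper's ordering buys is that the primitive appearing in \eqref{rec} comes out automatically matched to the primitive fixed in \eqref{defG} and \eqref{Siter4}, so no separate matching remark is needed. Both proofs are equally rigorous; the difference is one of packaging, not of substance.
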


\begin{proof} To prove \eqref{rec} we take the expression of $\gamma_{i-1}$ from the last equation of \eqref{Siter4bis} and get
\begin{equation*}
\gamma_{i-1}' =- q\sp  q'\sp  G_{i-1}'' - \frac 1 2\sp  q^2 \sp G_{i-1}''' 
= - q \left ( q G_{i-1}''  \right )' + \frac 1 2\sp  q^2 \sp G_{i-1}''' ,
\end{equation*}
hence
\begin{equation*}
- \frac {\gamma_{i-1}'} {2\sp q} 
=  \frac  1 2 \left ( q\sp  G_{i-1}''  \right )' - \frac 1 4\sp  q \sp G_{i-1}''' 
\end{equation*}
and 
\begin{equation*}
-  \int \frac {\gamma_{i-1}'} {2\sp q}  \sp  dt
=  \frac  1 2 \sp q\sp  G_{i-1}''   - \frac 1 4\sp  \int  q \sp G_{i-1}'''  \sp dt.
\end{equation*}
Therefore,
\begin{equation*}
-q   \int \frac {\gamma_{i-1}'} {2\sp q}  \sp  dt
=  \frac  1 2  q^2 G_{i-1}''   - \frac q 4\sp  \int  q \sp G_{i-1}'''  \sp dt
=  -\gamma_{i-1}  - \frac q 4\sp  \int  q \sp G_{i-1}'''  \sp dt
\end{equation*}
which, by \eqref{defG}, immediately gives \eqref{rec}.
Finally, \eqref{Gi/qprimo} clearly follows from \eqref{rec}.
\end{proof}
\vskip 0.2cm

In particular, since integration by parts easily yields 
\begin{equation}\label{elint}
\int q\sp q''' \sp dt =  q\sp q'' - \frac 1 2 \sp q'^2 +C \sp,  
\end{equation}
 from  \eqref{G00} and \eqref{rec} we  have
 \begin{equation}\label{G0}
 G_1 =  - \frac q 4  \int q\sp \sp  G_{0}''' \sp \sp dt  =
  \frac {C_0} 4  \left( \frac 1 2 {q'}^2- q\sp  q'' \right ) q\sp  + C_1\sp\sp  q\sp ,
  \end{equation}
where $C_1 \in \R $ is an additional   arbitrary constant of integration. 
By applying \eqref{G0}, we can then recursively obtain also the following
expression of $\sp G_2 \spp ,\dots, G_{k-2}\sp $:
\begin{Lemma} \label{recu3} 
Let $\sp k \in \N$, $\sp k \ge 4\sp$. 
For  $\spp 2 \le  i \le k-2\sp$,
\begin{equation}\label{tt4}
G_i=  - \frac q 4  \left [ q \sp G_{i-1}'' - q' G_{i-1}' + q'' G_{i-1} +
\frac 4 {C_0}\frac {G_1\sp G_{i-1}} {q^2}  + \frac 1 {C_0}\int G_1 \sp G_{i-2}''' \sp\sp  dt \sp \right ] . \end{equation}
 \end{Lemma}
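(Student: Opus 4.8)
The plan is to start from the recursion \eqref{rec} of Lemma~\ref{recu0}, namely $G_i = -\frac{q}{4}\int q\, G_{i-1}'''\, dt$, and to transform the integral $\int q\, G_{i-1}'''\, dt$ into the bracketed expression of \eqref{tt4} by repeated integration by parts. First I would integrate by parts three times, shifting each derivative off $G_{i-1}'''$ onto $q$, to obtain
\[
\int q\, G_{i-1}'''\, dt = q\, G_{i-1}'' - q'\, G_{i-1}' + q''\, G_{i-1} - \int q'''\, G_{i-1}\, dt .
\]
After multiplying by $-\frac{q}{4}$, the first three terms already reproduce the first three terms inside the brackets of \eqref{tt4}. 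The whole problem therefore reduces to rewriting the remaining integral $-\int q''' G_{i-1}\, dt$ as $\frac{4}{C_0}\frac{G_1 G_{i-1}}{q^2} + \frac{1}{C_0}\int G_1 G_{i-2}'''\, dt$.

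To handle that integral I would use the special case $i=1$ of \eqref{Gi/qprimo}: since $G_0 = C_0\, q$ by \eqref{G00}, we have $G_0''' = C_0\, q'''$ and hence $\left(\frac{G_1}{q}\right)' = -\frac{C_0}{4}\, q\, q'''$. Because $C_0 \ne 0$ by \eqref{non0}, this lets me solve for the third derivative, $q''' = -\frac{4}{C_0\, q}\left(\frac{G_1}{q}\right)'$, and substitute it to get
\[
-\int q''' G_{i-1}\, dt = \frac{4}{C_0}\int \frac{G_{i-1}}{q}\left(\frac{G_1}{q}\right)'\, dt .
\]
A further integration by parts, this time differentiating $\frac{G_{i-1}}{q}$, produces the boundary term $\frac{G_1 G_{i-1}}{q^2}$ together with the integral $-\int \frac{G_1}{q}\left(\frac{G_{i-1}}{q}\right)'\, dt$.

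Finally I would invoke \eqref{Gi/qprimo} once more, now with index $i-1$ (legitimate since $2 \le i \le k-2$ forces $1 \le i-1 \le k-2$), to replace $\left(\frac{G_{i-1}}{q}\right)' = -\frac{1}{4}\, q\, G_{i-2}'''$; the factor $q$ cancels the $\frac{1}{q}$ and leaves $\frac{1}{4}\int G_1 G_{i-2}'''\, dt$. Collecting the prefactor $\frac{4}{C_0}$ then yields exactly $\frac{4}{C_0}\frac{G_1 G_{i-1}}{q^2} + \frac{1}{C_0}\int G_1 G_{i-2}'''\, dt$, which combined with the three integrated terms above gives \eqref{tt4}. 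The only delicate points are the bookkeeping of the successive integrations by parts and the consistent handling of the constants of integration, so that the primitives appearing on both sides are the ones fixed in \eqref{Siter4}; the hypothesis $C_0 \ne 0$, used to invert the relation for $q'''$, is the single genuinely essential ingredient, and I expect the main place to slip is the index check guaranteeing that $G_{i-2}$ and the index-$(i-1)$ case of \eqref{Gi/qprimo} are well defined.
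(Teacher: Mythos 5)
Your proof is correct and follows essentially the same route as the paper: three integrations by parts on $\int q\, G_{i-1}'''\,dt$, substitution of $q\,q''' = -\tfrac{4}{C_0}\bigl(\tfrac{G_1}{q}\bigr)'$ (which the paper derives directly from the explicit formula \eqref{G0} for $G_1$, while you obtain it equivalently from \eqref{Gi/qprimo} at $i=1$ together with $G_0=C_0\,q$), one further integration by parts, and a final application of \eqref{Gi/qprimo} with index $i-1$. The index check you flag ($2\le i\le k-2$ gives $1\le i-1\le k-2$) is exactly the one the paper relies on, so there is nothing to add.
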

\begin{proof}
To prove \eqref{tt4}, we note that we can write \eqref{G0} as
\begin{equation}\label{G0bis}
 q \sp q'''= - \frac 4 {C_0} \left (\frac {G_1} q  \right )' 
 \end{equation}
since $  \sp C_0 \ne 0\sp$. 

We then integrate \eqref{rec} by parts and use \eqref{G0bis}:
\begin{equation}
\begin{aligned}\label{ipp3}
G_i &=  - \frac q 4  \left [ q\spp G_{i-1}'' - q' G_{i-1}' + q'' G_{i-1} -
 \int q''' \sp \sp G_{i-1}  \sp \sp dt \right ] 
\\
&=  
- \frac q 4  \left [ q\spp G_{i-1}'' - q' G_{i-1}' + q'' G_{i-1} - \int q q'''  \left (\frac{ G_{i-1}} q 
\right  ) \sp \sp dt \right ]
\\
&=  
- \frac q 4  \left [ q\spp G_{i-1}'' - q' G_{i-1}' + q'' G_{i-1} + \frac 4 {C_0}   \int
 \left (\frac{ G_1 } q \right )' \left (\frac{ G_{i-1}} q 
\right  ) \sp \sp dt \right ]
\\
&=  
- \frac q 4  \left [ q\spp G_{i-1}'' - q' G_{i-1}' + q'' G_{i-1} + \frac 4 {C_0} 
  \left (\frac{ G_1 } q \right ) \left (\frac{ G_{i-1}} q 
\right  )  - \frac 4 {C_0}
 \int
 \left (\frac{ G_1 } q \right ) \left (\frac{ G_{i-1}} q 
\right  ) '\sp \sp dt \sp \right ].
\end{aligned}
\end{equation}

From \eqref{Gi/qprimo}, with $\sp i-1\sp$ instead of $\sp i\sp $, we   finally
get \eqref{tt4}.
\end{proof}

\vskip 0.2cm
\begin{Lemma} \label{recu4}Let $\sp k \in \N$, $\sp k \ge 3\sp$.  Then, for all 
$\spp 0 \le i , \sp j \le k-3\sp$ we have:
\begin{equation} \label{tt6}
\int G_i \sp G_j'''\sp  dt = G_i \sp G_j''- G_i'\sp G_j'+ G_i'' \sp G_j +  4 \int 
\left ( \frac {G_{i+1}} q\right )' \left ( \frac {G_{j}} q\right ) dt \sp .
\end{equation}
If $\sp k \in \N$, $\sp k \ge 4\sp$ and  $\spp 0 \le i \le k-3\sp$, then
\begin{equation} \label{tt7}
\int G_i \sp G_j'''\sp  dt = G_i \sp G_j''- G_i'\sp G_j'+ G_i'' \sp G_j +  
4 \sp\sp \frac{G_{i+1} \sp G_j} {q^2}
+ 
 \int  G_{i+1} \sp  \sp  G_{j-1}'''\sp \sp dt\sp ,
\end{equation}
for all  $\spp 1 \le j \le k-3\sp$.
 \end{Lemma}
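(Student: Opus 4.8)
The plan is to obtain both identities purely by repeated integration by parts, using the relation \eqref{Gi/qprimo} of Lemma~\ref{recu0} to re-express third derivatives of the $G_i$. For \eqref{tt6} I would start from $\int G_i\sp G_j'''\,dt$ and integrate by parts three times, each step transferring one derivative from $G_j$ onto $G_i$. This produces the three boundary-type terms $G_i\sp G_j'' - G_i'\sp G_j' + G_i''\sp G_j$ together with the leftover integral $-\int G_i'''\sp G_j\,dt$. It then remains only to rewrite this last integral: setting $i\mapsto i+1$ in \eqref{Gi/qprimo} gives $G_i''' = -\frac{4}{q}\left(\frac{G_{i+1}}{q}\right)'$, valid precisely when $0\le i\le k-3$, whence $-\int G_i'''\sp G_j\,dt = 4\int \left(\frac{G_{i+1}}{q}\right)'\frac{G_j}{q}\,dt$. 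Collecting the terms yields \eqref{tt6}.

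For \eqref{tt7} I would take \eqref{tt6} as the starting point and process only its final term $4\int \left(\frac{G_{i+1}}{q}\right)'\frac{G_j}{q}\,dt$. Integrating this by parts once gives $4\,\frac{G_{i+1}\sp G_j}{q^2} - 4\int \frac{G_{i+1}}{q}\left(\frac{G_j}{q}\right)'\,dt$. Now I would invoke \eqref{Gi/qprimo} a second time, but with the index $i\mapsto j$, which gives $\left(\frac{G_j}{q}\right)' = -\frac{1}{4}\,q\,G_{j-1}'''$ for $1\le j\le k-2$; the factor $q$ cancels the $1/q$, and the coefficient $-4\cdot(-\frac14)=1$, so the remaining integral becomes exactly $\int G_{i+1}\sp G_{j-1}'''\,dt$. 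Substituting back into \eqref{tt6} produces \eqref{tt7}.

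Since both arguments are elementary, the only real care needed is bookkeeping of signs through the successive integrations by parts, and — more importantly — verifying that the indices at which \eqref{Gi/qprimo} is applied stay within its range of validity $1\le\,\cdot\,\le k-2$. For \eqref{tt6} this is precisely what forces $i\le k-3$ (so that $i+1\le k-2$); for \eqref{tt7} the second application at index $j$ is what forces $1\le j$, while the hypothesis $j\le k-3$ keeps $G_{j-1}'''$ (and indeed $G_j'''$) within the regularity recorded in Remark~\ref{Greg}, so that every integration by parts is legitimate. I expect this index bookkeeping, rather than any analytic difficulty, to be the main point to get right.
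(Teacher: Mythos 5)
Your proposal is correct and follows essentially the same route as the paper's own proof: triple integration by parts to produce $G_iG_j''-G_i'G_j'+G_i''G_j-\int G_i'''G_j\,dt$, then \eqref{Gi/qprimo} at index $i+1$ (requiring $i\le k-3$) to get \eqref{tt6}, and one further integration by parts plus \eqref{Gi/qprimo} at index $j$ (requiring $j\ge 1$) to get \eqref{tt7}. The index bookkeeping you flag is exactly the point the paper's hypotheses encode, so nothing is missing.
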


\begin{proof} 
Let us prove \eqref{tt6}. Integrating by parts and using \eqref{Gi/qprimo} we have
\begin{equation}
\begin{aligned}\label{ipp8}
\int G_i \sp G_j''' dt &=  G_iG_j''- G'_iG_j'+ G_i'' G_j - \int G_i'''\sp G_j \sp\sp  dt
\\
&=  
G_iG_j''- G'_iG_j'+ G_i'' G_j - \int \big ( q {G_i'''}\big ) \sp  \left (\frac{G_j } q \right )\sp\sp  dt
\\
&=  
 G_iG_j''- G'_iG_j'+ G_i'' G_j + 4 \int \left ( \frac {G_{i+1}} q \right)'\sp \left (\frac{G_j } q\right ) \sp \sp dt
.
\end{aligned}
\end{equation}
This gives \eqref{tt6}. Next, to prove \eqref{tt7} we assume $\sp k \ge 4\sp $ and $\sp 1 \le  j\le k-3\sp $. 
Integrating by part and using \eqref{Gi/qprimo} for $i=j$ we get:
\begin{equation}
\begin{aligned}\label{ipp9}
\int \left ( \frac {G_{i+1}} q \right)'\sp \left (\frac{G_j } q\right ) \sp \sp dt &= 
\frac {G_{i+1} \sp G_j} {q^2}
- \int\left ( \frac {G_{i+1}} q \right)\sp \left (\frac{G_j } q\right )' \sp \sp dt
\\
&= \frac {G_{i+1} \sp G_j} {q^2} + \frac 1  4 
 \int\left ( \frac {G_{i+1}} q \right)\sp  \big ( q G_{j-1}'''\big )\sp \sp dt\sp .
\end{aligned}
\end{equation}
Together with \eqref{ipp8} this gives \eqref{tt7}.
\end{proof}

Finally, we  need the following preliminary result:
\begin{Lemma}
\label{clm1}  
Let $\sp k \in \N$, $\sp k \ge 4\sp$. Given  $\sp j \in \N\sp$, $\sp 1 \le  j \le k-3\spp$, let us
suppose that  $\spp G_i\spp $ is of the form 
\begin{equation}\label{exprebis}
G_i = q\sp \Pcal_i  \!\left (q, q', \dots, q^{(2i)}\sp  \right ) \quad \text{for} \quad 0\le i \le j\sp ,
\end{equation}
with $\sp \Pcal_i\sp$ a suitable polynomial of its arguments. Then
\begin{equation} \label{pas2}
\int G_1 \sp G_{j}''' \sp\sp  dt  
\end{equation}
is a   polynomial in $\sp q\sp$ and its derivatives of order $\sp \le 2j+2 \spp $.
\end{Lemma}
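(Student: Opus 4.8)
The plan is to evaluate $\int G_1 G_j''' \, dt$ by iterating the reduction formula \eqref{tt7}, whose defining feature is that it lowers the second index while raising the first, keeping the sum of the two indices fixed. Applying it once (with $i=1$) gives
\begin{equation*}
\int G_1 G_j''' \, dt = G_1 G_j'' - G_1' G_j' + G_1'' G_j + 4\,\frac{G_2 G_j}{q^2} + \int G_2 G_{j-1}''' \, dt,
\end{equation*}
and in general, after $m$ applications (using \eqref{tt7} with $i=1,2,\dots,m$), the residual integral is $\int G_{m+1} G_{j-m}''' \, dt$, while the accumulated boundary terms involve only $G_1,\dots,G_{m+1}$ and $G_{j-m+1},\dots,G_j$. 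I would iterate exactly $j-1$ times, stopping at the residual integral $\int G_j G_1''' \, dt$. At that point every $G$ that has appeared carries an index in $\{1,\dots,j\}$, each of which is controlled by the hypothesis \eqref{exprebis}; all the applications of \eqref{tt7} are legitimate since the indices satisfy $m\le j-1\le k-3$ and $1\le j-m+1\le k-3$.

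The crucial bookkeeping point, and the step I expect to be the main obstacle, is to stop \emph{before} the second index drops below $1$. A further application of \eqref{tt7} to $\int G_j G_1''' \, dt$ would produce the term $\int G_{j+1} G_0''' \, dt$ together with the factor $G_{j+1}$, whose polynomial structure is \emph{not} granted by the hypothesis (which covers indices only up to $j$). Instead I would close the recursion by three plain integrations by parts, i.e. the identity that forms the first line of the computation in the proof of Lemma~\ref{recu4}:
\begin{equation*}
\int G_1 G_j''' \, dt = G_1 G_j'' - G_1' G_j' + G_1'' G_j - \int G_j G_1''' \, dt,
\end{equation*}
which expresses the sum $\int G_1 G_j''' \, dt + \int G_j G_1''' \, dt$ as an integral-free quantity built from $G_1$ and $G_j$ alone.

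Adding the iterated identity to this integration-by-parts identity makes the residual integral $\int G_j G_1''' \, dt$ cancel, and solving for the desired integral yields
\begin{equation*}
\begin{aligned}
2\int G_1 G_j''' \, dt = {}& \big(G_1 G_j'' - G_1' G_j' + G_1'' G_j\big) \\
&+ \sum_{m=1}^{j-1}\Big(G_m G_{j-m+1}'' - G_m' G_{j-m+1}' + G_m'' G_{j-m+1} + 4\,\frac{G_{m+1} G_{j-m+1}}{q^2}\Big),
\end{aligned}
\end{equation*}
a finite sum with no integral sign. It then remains only to verify polynomiality. By \eqref{exprebis}, $G_i = q\,\Pcal_i(q,\dots,q^{(2i)})$ for $0\le i\le j$, so each product and each derivative occurring above is a polynomial in $q$ and its derivatives; in particular the apparently singular terms $G_{m+1}G_{j-m+1}/q^2$ reduce to $\Pcal_{m+1}\Pcal_{j-m+1}$ once the two explicit factors of $q$ cancel the denominator. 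The highest derivative enters through $G_j''$, which involves $q^{(2j+2)}$, so the right-hand side is a polynomial in $q$ and its derivatives of order $\le 2j+2$, as claimed. (For $j=1$ the sum is empty and the integration-by-parts identity alone already gives $\int G_1 G_1''' \, dt = G_1 G_1'' - \tfrac12 (G_1')^2$.)
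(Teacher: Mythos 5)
Your proof is correct, and it closes the recursion differently from the paper. Both arguments run on the same engine, the reduction formula \eqref{tt7} of Lemma~\ref{recu4}, iterated so that the index sum stays equal to $j+1$ and every index that appears stays $\le j$, hence inside the hypothesis \eqref{exprebis}; and you correctly identified the real constraint, namely that one must stop before the second index drops to $0$, since $G_{j+1}$ is not covered by \eqref{exprebis}. Where you diverge is the termination. The paper stops the iteration in the \emph{middle}: it distinguishes $j$ odd from $j$ even, applies \eqref{tt7} exactly $l$ times (resp.\ $l-1$ times followed by one application of \eqref{tt6}), and lands on an elementary integral with equal indices, $\int G_m G_m'''\,dt = G_m G_m'' - \tfrac12 (G_m')^2 + C$ in the odd case and $\int (G_m/q)'(G_m/q)\,dt = \tfrac12 (G_m/q)^2 + C$ in the even case. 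You instead iterate all the way to the far end, $\int G_j G_1'''\,dt$, and close with the symmetric integration-by-parts identity
\begin{equation*}
\int G_1 G_j'''\,dt + \int G_j G_1'''\,dt = G_1 G_j'' - G_1' G_j' + G_1'' G_j + C\sp ,
\end{equation*}
so that adding the two identities cancels the residual integral and produces an explicit closed formula for $2\int G_1 G_j'''\,dt$. What each approach buys: yours eliminates the odd/even case split and the use of \eqref{tt6} altogether, giving a single uniform formula valid for every $j\ge 1$ (with the $j=1$ case as the empty sum); the paper's uses roughly half as many iterations and ends on perfect-square primitives. Your verification of polynomiality and of the derivative bound $\le 2j+2$ (attained by $G_j''$, i.e.\ by $q^{(2j+2)}$) matches the paper's, including the cancellation of the $q^2$ denominators against the explicit factors of $q$ in \eqref{exprebis}.
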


\begin{proof}
 We apply Lemma~\ref{recu4}, distinguishing between the case $\sp j\sp$ odd and the 
case $\sp j\sp $ even.

\vskip 2mm
$\;${\it Case $\sp j = 2\sp l+1\sp$.}\; We may suppose $\sp l \ge 1\sp$, because for $\sp l=0\sp $ 
it is sufficient to integrate by parts $\int G_1G_1'''dt$ (as in formula  \eqref{elint}) and use the expression of
$\sp G_1\sp$ given in \eqref{G0}.

Then, assuming $\sp l \ge 1 \sp$, we apply $\sp l\,$ times formula \eqref{tt7}  of
 Lemma~\ref{recu4}. Taking into account that,
 for $\spp 0 \le i \le  j\spp$,  $\sp  G_i \sp $ is of the form \eqref{exprebis}, we find:
\begin{equation} \label{pas3}
\begin{aligned}
\int G_1 \sp G_{j}'''\sp  dt &= G_1 \sp G_{j}''- G_1'\sp G_{j}'+ G_1'' \sp G_{j} +  
4 \sp\sp \frac{G_{2} \sp G_{j}} {q^2}
+ 
 \int  G_{2} \sp  \sp  G_{{j-1}}'''\sp \sp dt
 \\
 &=  \{ \ast \} + 
 \int  G_{2} \sp  \sp  G_{j-1}'''\sp \sp dt
 \\
&=  \{ \ast \} +  G_2 \sp G_{j-1}''- G_2'\sp G_{j-1}'+ G_2'' \sp G_{j-1} +  
4 \sp\sp \frac{G_3 \sp G_{j-1}} {q^2}
+ 
 \int  G_{3} \sp  \sp  G_{j-2}'''\sp \sp dt
  \\
 &=  \{ \ast \} + 
 \int  G_{3} \sp  \sp  G_{{j-2}}'''\sp \spp dt
  \\
 &\ \,\vdots\\
 &=  
 \{ \ast \} + 
 \int  G_{1+l} \sp  \spp  G_{{j-l}}'''\sp \spp dt\sp ,
 \end{aligned}
\end{equation}
where in each step of \eqref{pas3} the symbol $\sp \{ \ast  \}$ denotes a polynomial expression in $\sp q \sp $ 
and its derivatives of order $\sp \le 2 j+2\sp $. Noting that 
$$
 1+ l = j-l= \frac{j+1} 2\sp , 
$$
the last indefinite integral of \eqref{pas3} is elementary:
\begin{equation} \label{pas3bis}
 \int  G_{1+l} \sp  \sp  G_{{j-l}}'''\sp \sp dt = G_{\frac {j+1} 2} \, G_{\frac {j+1} 2}''\spp - 
 \frac  1 2 \left (  G_{\frac {j+1} 2}' \right )^2 + C\sp ,
\end{equation}
with $\sp C \in \R\sp $ an arbitrary constant. 
By \eqref{exprebis}, the right hand side of \eqref{pas3bis} is a polynomial in $\sp q \sp $ and its 
derivatives of order $\sp \le 2(j+1)/2+2\leq2j+2\spp$, since $j\geq1$.

\vskip 3mm
$\;${\it Case $\sp j = 2\sp l\sp$.}\; 
If $\sp l= 1\sp $, we immediately apply formula \eqref{tt6} of Lemma~\ref{recu4} with $\sp i =1\sp $ and $\sp j =2\sp$:

\begin{equation} \label{tt6bis}
\begin{aligned}
\int G_1 \sp G_2'''\sp  dt &= G_1 \sp G_2''- G_1'\sp G_2'+ G_1'' \sp G_2 +  4 \int 
\left ( \frac {G_{2}} q\right )' \left ( \frac {G_{2}} q\right ) dt
\\
&=  G_1 \sp G_2''- G_1'\sp G_2'+ G_1'' \sp G_2 +  2 \left ( \frac{G_2} q \right )^2 +C \sp ,
\end{aligned}
\end{equation}
where $\sp C \in \R \sp$ is an arbitrary constant. 
By \eqref{exprebis}, with $\sp j = 2\sp $, the last expression in \eqref{tt6bis} is 
a polynomial in $\sp q \sp $ 
and its derivatives of order $\sp \le 6\sp = 2j+2\sp$ for $\sp j =2 \sp$.

Let us now suppose $\sp l\ge 2\sp$. We  apply $(\spp l-1)$-times  formula \eqref{tt7} and argue as in \eqref{pas3}. 
We then obtain: 
\begin{equation} \label{pas4}
\int G_1 \sp G_{j}'''\sp  dt =   \{ \ast \} + 
 \int  G_{l} \sp  \sp  G_{{j+1-l}}'''\sp \sp dt\sp ,
\end{equation}
where  $\sp \{ \ast  \}$ is a polynomial expression in $\sp q \sp $ 
and its derivatives of order $\sp \le 2j+2\sp$.
 
Finally, we apply formula \eqref{tt6}  
to the last integral of \eqref{pas4} and find:
\begin{equation} \label{pas5}
\begin{aligned}
\int G_1 \sp G_{j}'''\sp  dt &= \{ \ast \} +  G_l \sp G_{j+1-l}''- G_l'\sp G_{j+1-l}'+ G_l'' \sp G_{j+1-l}
\\
&\quad \quad +  4 \int 
\left ( \frac {G_{l+1}} q\right )' \left ( \frac {G_{j+1-l}} q\right ) dt
\\
&= \{ \ast \} +  4 \int 
\left ( \frac {G_{l+1}} q\right )' \left ( \frac {G_{j+1-l}} q\right ) dt\sp ,
\end{aligned}
\end{equation}
where  $\sp \{ \ast  \}$  still represents a polynomial expression in $\sp q \sp $ 
and its derivatives of order $\sp \le 2 j+2\sp $.
The last indefinite  integral of \eqref{pas5} is elementary because 
$$
 l+1= j+1-l = \frac {j+2}2
$$ 
and hence
\begin{equation}
\label{pas6}
\int 
\left ( \frac {G_{l+1}} q\right )' \left ( \frac {G_{j+1-l}} q\right ) dt = \frac 1  2
\left (\frac {G_{\frac {j+2} 2}} q\right )^2+ C,
\end{equation}
with $\sp C \in \R\sp $ an arbitrary constant.  By \eqref{exprebis}, the right hand side of \eqref{pas6} is clearly
a polynomial in 
$\sp q\sp$ and its derivatives of order $\sp \le j+2\leq 2j +2 \spp$. This concludes the proof.
\end{proof}

\begin{Rem} Note that also $\int G_1G_0'''\sp dt$ is a polynomial in $q$ and its derivatives of
order $\leq2$. Indeed,
from \eqref{Gi/qprimo} we have
\begin{equation}
\int G_1 G_0'''\sp  dt = \int  \left (\frac {G_1} q \right ) \sp q \sp G_0'''\sp  dt = 
-4 \int  \left (\frac {G_1} q \right ) \sp \left (\frac {G_1} q \right ) '\sp  dt = 
-2   \left (\frac {G_1} q \right )^2 + C\sp ,
\end{equation}
with $\sp C \in \R\sp $ an arbitrary constant.  Therefore, taking into account \eqref{G0},   $\sp \int G_1 G_0'''\sp  dt  \sp $ is a polynomial in 
$\sp q, q'\sp $ and $\sp q''\sp$.  
\end{Rem}

We can now prove the main results of this section. As already observed after \eqref{S2bis} for the functions
$\alpha_i$, $\beta_i $ and $ \gamma_i$, we note that the form of 
the elements of the sequence $\sp G_0 \spp , \dots , G_{k-2}\sp$ does not depend on 
$\sp k \sp$. In other words, given $\sp i \in \N$, the expression of the function $\sp G_i\sp $ is the same for every 
integer $ \sp k \ge 3 \sp $ such that $ \sp k-2 \ge i\sp $.

\begin{Th} 
\label{Poly}
Let $\sp k \in \N$, $\sp k \ge 3\sp$. 
Then, for $\sp 0 \le i \le k-2\sp $, we have
\begin{equation} \label{expre}
G_i=  q\spp \Pcal_i \!  \left (q \spp ,  \dots, \sp q^{(2i)}\right )  
\end{equation}
with $\sp \Pcal_i\sp$ a suitable polynomial of its arguments.
\end{Th}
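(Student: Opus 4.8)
The plan is to argue by induction on $\sp i\sp$, for $\sp 0 \le i \le k-2\sp$, that $\sp G_i\sp$ has the factored form \eqref{expre}. The two base cases are already in hand: for $\sp i=0\sp$, formula \eqref{G00} gives $\sp G_0 = C_0 q\sp$, so that $\sp \Pcal_0 \equiv C_0\sp$; for $\sp i=1\sp$, formula \eqref{G0} gives $\sp G_1 = q\big[\frac{C_0}{4}(\frac12 {q'}^2 - q q'') + C_1\big]\sp$, which is $\sp q\sp$ times a polynomial in $\sp q, q', q''\sp$, i.e. in derivatives of order $\sp \le 2\sp$. These settle $\sp i = 0, 1\sp$.

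For the inductive step I would fix $\sp i\sp$ with $\sp 2 \le i \le k-2\sp$ and assume \eqref{expre} for all indices $\sp < i\sp$. The natural tool is the recursion \eqref{tt4} of Lemma~\ref{recu3}, which already displays an explicit factor $\sp q\sp$ in front of a bracket; it therefore suffices to prove that this bracket is a polynomial in $\sp q\sp$ and its derivatives of order $\sp \le 2i\sp$. I would handle the five summands in turn. The three local terms $\sp q G_{i-1}''\sp$, $\sp q' G_{i-1}'\sp$ and $\sp q'' G_{i-1}\sp$ are immediately of the required type once \eqref{expre} is invoked for $\sp G_{i-1}\sp$: writing $\sp G_{i-1} = q\Pcal_{i-1}(q,\dots,q^{(2i-2)})\sp$ and differentiating at most twice raises the order of the arguments by at most $\sp 2\sp$, producing derivatives of order $\sp \le 2i\sp$.

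The fourth term $\sp \frac{4}{C_0}\,\frac{G_1 G_{i-1}}{q^2}\sp$ is where the precise shape of the inductive hypothesis is essential: writing $\sp G_1 = q\Pcal_1\sp$ and $\sp G_{i-1} = q\Pcal_{i-1}\sp$, the two explicit factors of $\sp q\sp$ cancel the $\sp q^2\sp$ in the denominator, leaving $\sp \frac{4}{C_0}\Pcal_1\Pcal_{i-1}\sp$, a genuine polynomial in $\sp q\sp$ and derivatives of order $\sp \le 2i-2\sp$; this cancellation is exactly the reason the theorem is phrased as $\sp G_i = q\Pcal_i\sp$ rather than merely asserting that $\sp G_i\sp$ is polynomial. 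The main obstacle is the remaining integral term $\sp \frac{1}{C_0}\int G_1 G_{i-2}''' \,dt\sp$, which is a priori neither local nor visibly polynomial. Here I would invoke Lemma~\ref{clm1} with its index equal to $\sp i-2\sp$, whose hypothesis — that $\sp G_0, \dots, G_{i-2}\sp$ all have the form \eqref{expre} — is supplied by the inductive assumption; it yields that $\sp \int G_1 G_{i-2}''' \,dt\sp$ is a polynomial in $\sp q\sp$ and derivatives of order $\sp \le 2(i-2)+2 = 2i-2 \le 2i\sp$. For the single value $\sp i=2\sp$, where Lemma~\ref{clm1} does not apply, the Remark following that lemma shows directly that $\sp \int G_1 G_0''' \,dt\sp$ is a polynomial in $\sp q, q', q''\sp$. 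Collecting the five contributions, the bracket in \eqref{tt4} is a polynomial in $\sp q\sp$ and its derivatives of order $\sp \le 2i\sp$, and multiplying by $\sp -q/4\sp$ gives $\sp G_i = q\Pcal_i(q,\dots,q^{(2i)})\sp$, completing the induction.
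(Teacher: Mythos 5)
Your proof is correct and takes essentially the same route as the paper's: base cases $G_0$, $G_1$ from \eqref{G00} and \eqref{G0}, then the recursion \eqref{tt4} of Lemma~\ref{recu3}, with the crucial cancellation in the term $G_1 G_{i-1}/q^2$ and with Lemma~\ref{clm1} disposing of the integral term. The only cosmetic differences are that the paper runs the induction on $k$ rather than on $i$ (equivalent, since the form of $G_i$ does not depend on $k$) and treats $G_2$ by a direct integration by parts (its formula \eqref{recuf1}), whereas you handle $i=2$ via Lemma~\ref{recu3} combined with the Remark on $\int G_1 G_0'''\,dt$ --- the two computations coincide.
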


\begin{proof} From  formulas \eqref{G00} and \eqref{G0}, we already know that $\sp G_0\sp $ and $\sp G_1\sp $ are  of the form \eqref{expre}. This means that the statement is certainly true for $\sp k=3\sp$. 

To prove Theorem~\ref{Poly}  for $ \sp k = 4\sp $ it is therefore sufficient to show that  $G_2$ is of the form
\eqref{expre}. Integrating by parts \eqref{rec} for $i=2$, we find
\begin{equation*}
G_2=  - \frac q 4  \left[ q\sp  G_1'' - q' G_1' + q'' G_1 - \int q''' \sp \sp G_1  \sp \sp dt \right ].
\end{equation*}
But, taking into account \eqref{G0bis}, we can write 
\begin{equation*}
q''' \sp G_1 = (q\sp q''') \sp \left (\frac {G_1} q \right )=
 -\frac 4 {C_0}\sp \left ( \frac {G_1} q \right )' \sp  \left ( \frac {G_1} q\right ),
\end{equation*}
so that
\begin{equation} 
\label{recuf1}
\begin{aligned}
G_2= & - \frac q 4  \left [ q \sp G_1'' - q' G_1' + q'' G_1 +\frac 4 {C_0} \int 
\left ( \frac {G_1} q \right )  \left ( \frac {G_1} q\right ) ' \sp dt \right ]\\
=&- \frac q 4  \left [ q\sp  G_1'' - q' G_1' + q'' G_1 +\frac 2 {C_0} \left( \frac {G_1} q \right )^2 +C \right ],
\end{aligned}
\end{equation}
for some $C\in\R$. Therefore $G_2$ is of the form \eqref{expre}.

Having proved the theorem also for $\sp k=4\sp$, we can conclude the proof by induction on $\sp k \ge 4\sp$. Let us then assume that the
statement is true for a given $\sp k_o \ge 4\sp $. This means that
\begin{eqnarray} 
\label{hypot}
G_0 \spp ,  \dots \sp , \spp G_{k_o-2} \quad \text{satisfy \eqref{expre}.}
\end{eqnarray}
Then,  taking $\, k= k_o+1 \sp $, we have to prove that 
 $$
 G_{k_o-1} = q \spp \sp \Pcal_{k_o-1} \!\! \sp \sp \left ( q, \dots , q^{(2k_o -2)} \right ),
 $$
 with $\sp \Pcal_{k_o-1}\sp$ a suitable polynomial of its arguments.

Now, applying Lemma~\ref{recu3}, we can write $\sp G_{k_o-1}\sp $ as 
\begin{equation}\label{tt4bis}
G_{k_o-1}=  - \frac q 4  \left [ q \sp G_{k_o-2}'' - q' G_{k_o-2}' + q'' G_{k_o-2} +
\frac 4 {C_0}\frac {G_1\spp G_{k_o-2}} {q^2}  + \frac 1 {C_0}\int G_1 \spp G_{k_o-3}''' \sp\sp  dt  \right ].
\end{equation}
The expression in the right hand side of \eqref{tt4bis} is  explicitly a multiple of $\sp q \sp $. 
Furthermore, by the induction hypothesis \eqref{hypot}, 
\begin{equation}\label{pas1}
q \sp G_{k_o-2}'' - q' G_{k_o-2}' + q'' G_{k_o-2} +
\frac 4 {C_0}\frac {G_1\spp G_{k_o-2}} {q^2} 
 \end{equation}
is a  polynomial in $\sp q\sp$ and its derivatives of order $\sp \le 2(k_o-2)+2 = 2 k_o-2\sp $. It remains therefore to prove that also the indefinite integral 
\begin{equation} \label{integral}
\int G_1 \spp G_{k_o-3}'''  \sp dt\sp  
\end{equation}
is a polynomial in $\sp q\sp $ and its derivatives of order $\sp \le 2k_o -2\sp$.

For this purpose  we apply Lemma~\ref{clm1} with $\sp k = k_o +1\sp$ and $\sp j = k_o-3\sp$. 
In fact,  since $k_o \ge 4\sp$, we have $\sp 1\le  j= k_o-3  \le k-3\sp$  and by the 
induction hypothesis \eqref{hypot} we know that 
$$
G_0\spp , \dots , G_{k_o-3} \quad \text{satisfy condition \eqref{exprebis}. }
$$
By Lemma~\ref{clm1} it follows that the indefinite integral \eqref{integral} 
is a polynomial in $\sp q\sp $ and its derivatives of order $\sp \le 2(k_o -3) + 2 = 2 k_o-4\sp$.
This concludes the proof.
\end{proof}

\vskip 2mm
From Theorem~\ref{Poly} we immediately obtain:

\begin{Cor} \label{coro} Let  $\sp k\in\N, k \ge 3\sp$. 
Let  $\sp \alpha_i, \beta_i, \gamma_i \sp $  be the functions defined in \eqref{Siter4} 
for $\sp 0 \le i \le k-2\sp $.
 Then,  $\sp \alpha_i, \beta_i, \gamma_i \sp $ are 
 polynomials in $\sp q \sp$ and its derivatives $\sp q^{(h) }$ of order
 $\sp h\le 2i \sp $, $\sp h\le 2i + 1\sp $ and $\sp h\le 2i +2  \sp $, respectively.
\end{Cor}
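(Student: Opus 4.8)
The plan is to read off the conclusion directly from Theorem~\ref{Poly} and the explicit formulas \eqref{Siter4bis}, relying only on the elementary fact that $t$-differentiation raises the maximal order of the derivatives appearing in a polynomial expression by exactly one. First I would record the preliminary observation that $\sp G_i \sp$, being the product of $\sp q\sp$ with a polynomial in $\sp q, \dots, q^{(2i)}\sp$, is itself a polynomial in $\sp q, \dots, q^{(2i)}\sp$; Theorem~\ref{Poly} thus says precisely that $\sp G_i\sp$ is a polynomial in $\sp q\sp$ and its derivatives of order $\sp \le 2i\sp$.

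The key mechanism is the following product/chain rule bookkeeping: if $\sp F\sp$ is a polynomial in $\sp q, q', \dots, q^{(m)}\sp$, then $\sp \frac{d}{dt}F\sp$ is a polynomial in $\sp q, q', \dots, q^{(m+1)}\sp$, since each monomial differentiates into a sum of monomials in which a single factor $\sp q^{(j)}\sp$ is replaced by $\sp q^{(j+1)}\sp$, raising the order by at most one. Applying this once to $\sp \beta_i = - G_i'\sp$ shows that $\sp \beta_i\sp$ is a polynomial in $\sp q\sp$ and its derivatives of order $\sp \le 2i+1\sp$. Applying it twice to $\sp G_i\sp$, and noting that the multiplication by $\sp q^2\sp$ in $\sp \gamma_i = -\tfrac12 q^2 G_i''\sp$ introduces no new derivatives, shows that $\sp \gamma_i\sp$ is a polynomial in $\sp q\sp$ and its derivatives of order $\sp \le 2i+2\sp$.

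For $\sp \alpha_i = G_i - \gamma_{i-1}\sp$ I would combine the two bounds just obtained. By Theorem~\ref{Poly}, $\sp G_i\sp$ has order $\sp \le 2i\sp$; by the estimate for $\sp \gamma\sp$ applied with $\sp i\sp$ replaced by $\sp i-1\sp$, the term $\sp \gamma_{i-1}\sp$ has order $\sp \le 2(i-1)+2 = 2i\sp$, so their difference $\sp \alpha_i\sp$ is again a polynomial in $\sp q\sp$ and its derivatives of order $\sp \le 2i\sp$. The degenerate case $\sp i=0\sp$ is covered by the convention $\sp \gamma_{-1} \equiv 0\sp$ of \eqref{g-1}, which gives $\sp \alpha_0 = G_0 = C_0\sp q\sp$, consistent with \eqref{c0}.

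There is essentially no serious obstacle here: the entire argument is a transcription of Theorem~\ref{Poly} through the relations \eqref{Siter4bis}. The only point requiring care is the order bookkeeping, namely verifying that neither the factor $\sp q^2\sp$ in the formula for $\sp \gamma_i\sp$ nor the subtraction of $\sp \gamma_{i-1}\sp$ in the formula for $\sp \alpha_i\sp$ pushes the derivative order past the stated bounds, which the computation above confirms.
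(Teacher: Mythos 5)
Your proposal is correct and is essentially the paper's own proof: the paper disposes of this corollary in one line, citing exactly the relations \eqref{Siter4bis} together with Theorem~\ref{Poly}, and your argument simply fills in the routine bookkeeping (differentiation raises the order by one, multiplication by $q^2$ introduces nothing new, and $\gamma_{i-1}$ has order $\le 2(i-1)+2=2i$ so the subtraction in $\alpha_i = G_i - \gamma_{i-1}$ stays within order $2i$) that the authors leave implicit.
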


\begin{proof} 
It follows from \eqref{Siter4bis} and Theorem~\ref{Poly}.
\end{proof}


\vskip 0.3cm
\section{The quadratic form for the Kirchhoff-Pokhozhaev equation} 


We now apply the results of the previous sections to  equation \eqref{KP}. 

\vskip 1mm

$\quad$Let $\sp k\in \N\sp$, $\sp k \ge 3\sp$, and let $\sp u \sp$  be a solution of \eqref{KP} in $\sp \R^n\times [0,T) $ such  that
\begin{equation*}
u \in C^j([0,T); H^{k-j}(\R^n))  \quad  \quad (j= 0,1) \sp , 
\end{equation*}
and \eqref{reg2} holds.  From Remark~\ref{lorder} it follows that
$$
\, t \,\mapsto \, \int_{\R^n} |\nabla u(x,t)|^2 dx \,
$$ 
is a $\sp C^{2k-2}\sp $ function in $[0,T)$.

Taking the partial Fourier transform of $\sp u= u(x,t) \sp$ with respect to the  $x\sp$-variable, i.e.,
\begin{equation*}
w(\xi,t) := (2\pi)^{-n/2} \int_{\R^n} u(x,t) e^{-i x \cdot \xi} \spp dx,  \quad \xi \in \R^n,
\end{equation*}
and noting that 
$$
\,\int_{\R^n} |\nabla u(x,t)|^2 dx= \int_{\R^n} |\xi|^2 |w(\xi,t)|^2 d\xi \spp ,
$$ 
by Plancharel theorem, we are  lead to 
consider the equation
 \begin{equation}
 \label{KP3}
w_{tt} + \frac {|\xi|^2 \spp w} {\big ( a\int_{\R^n} |\xi |^2 |w |^2 d\xi+ b \big )^2 } 
\spp  = 0 ,\quad 
   t\in [0,T).
   \end{equation}
Next, we set
 $q= q(t) = a\|\nabla u \|^2 +b$
as in \eqref{reg2} and  consider the Liouville-type equation 
$$
w_{tt} + \frac 1 {q^2} \sp  |\xi |^2 \sp  w =0\sp ,
$$
for which we introduce the quadratic form ${\pazocal E}_k(\xi, t)$ as in the Definition~\ref{Qdef}. 

In the definition of  $\spp {\pazocal E}_k(\xi, t)\spp $ we use the coefficients
$\alpha_i, \beta_i $ and $\gamma_i  $ obtained  by solving the  system \eqref{S2bis} or, equivalently,  
we take $\alpha_i, \beta_i $ and $\gamma_i$  as in  \eqref{Siter4}, for $\sp 0 \le i \le k-2\sp $. 
Furthermore, we assume that the first constant of integration is such that \eqref{c0} holds with $\sp C_0 \ne 0\sp$. 
Thus, all the results of Section~\ref{polyst} are valid.

Then we introduce the functional 
 \begin{equation}
 \label{newEk}
 E_k= E_k(t) := \int_{\R^n} {\pazocal E}_k(\xi, t)   d\xi \quad \text{for} \quad t \in [0,T).
 \end{equation}
Noting that 
\begin{equation} \label{s1}
\int_{\R^n} |\xi|^2\Re (\overline{w} w_t) d\xi = \frac {1}{2\sp a }\spi q'\sp ,
\end{equation}
from formula \eqref{derek} we easily get
\begin{equation}\label{derE}
\frac{d}{dt} E_k =  \beta_{k-2}' \int_{\R^n} |\xi|^2\Re (\overline{w} w_t) d\xi = 
\frac {1}{2\sp a }\spi \beta_{k-2}'\spi q' \sp .
\end{equation}

Applying Lemma~\ref{clm1} and Theorem~\ref{Poly}, we have the following:

\begin{Lemma}\label{primit} Let $\sp k \in \N$, $\sp k \ge 3\sp$. Then  
\begin{equation}\label{prim}
\beta_{k-2}'\sp  q' = \big ( \beta_{k-2}\sp  q' \big )' + \frac{ d }{dt} \sp Q_k \!
\left (q, \sp q', \dots, \sp q^{(2k-4)} \right )
\end{equation}
where $\spp Q_k\spp $ is a suitable polynomial of its arguments.
\end{Lemma}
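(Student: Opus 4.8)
The plan is to reduce the statement to an antiderivative computation and then exploit the recursions of Section~\ref{polyst}. Since \eqref{Siter4bis} gives $\beta_{k-2}=-G_{k-2}'$, the product rule yields the exact identity $\beta_{k-2}'\sp q'=(\beta_{k-2}\sp q')'+G_{k-2}'\sp q''$. Hence it suffices to exhibit a primitive of $\sp G_{k-2}'\sp q''\sp$ that is a polynomial in $\sp q\sp$ and its derivatives of order $\le 2k-4$; calling such a primitive $Q_k$ gives exactly \eqref{prim}.

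Next I would integrate $\int G_{k-2}'\sp q''\,dt$ by parts twice, moving the derivatives off $q''$, to obtain
\begin{equation*}
\int G_{k-2}'\sp q''\,dt = G_{k-2}'\sp q' - G_{k-2}''\sp q + \int q\sp G_{k-2}'''\,dt.
\end{equation*}
The remaining integral is precisely the one governed by the basic recursion \eqref{rec} of Lemma~\ref{recu0}, namely $\int q\sp G_{k-2}'''\,dt = -4\sp G_{k-1}/q$ (the instance $i=k-1$ is legitimate because, as noted after \eqref{S2bis}, the expression of $G_i$ does not depend on $\sp k\sp$). I would then replace $G_{k-1}/q$ by formula \eqref{tt4} of Lemma~\ref{recu3} applied with index $k-1$. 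The point of this substitution is that the two high-order boundary terms $G_{k-2}'\sp q'$ and $-G_{k-2}''\sp q$ reappear inside \eqref{tt4} with the opposite sign and cancel, leaving
\begin{equation*}
\int G_{k-2}'\sp q''\,dt = q''\sp G_{k-2} + \frac{4}{C_0}\sp\frac{G_1\sp G_{k-2}}{q^2} + \frac{1}{C_0}\int G_1\sp G_{k-3}'''\,dt.
\end{equation*}

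Finally I would read off the orders. By Theorem~\ref{Poly}, $G_{k-2}=q\sp\Pcal_{k-2}(q,\dots,q^{(2k-4)})$ and $G_1=q\sp\Pcal_1(q,q',q'')$, so the first two terms are polynomials in $\sp q,\dots,q^{(2k-4)}$. For the last term I would invoke Lemma~\ref{clm1} with $\sp j=k-3\sp$ (valid when $k\ge 4$), which shows that $\int G_1\sp G_{k-3}'''\,dt$ is a polynomial of order $\le 2(k-3)+2=2k-4$, and use the Remark following Lemma~\ref{clm1} to handle the base case $k=3$, where the integral is $\int G_1\sp G_0'''\,dt$. This identifies $Q_k$ as a polynomial in $\sp q,\dots,q^{(2k-4)}\sp$, as required.

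The main obstacle is the differential order. A naive antiderivative of $\beta_{k-2}'\sp q'$ carries derivatives of $\sp q\sp$ up to order $2k-2$, two orders too high, and even the intermediate step above formally involves $G_{k-2}'''$, hence $q^{(2k-1)}$, which exceeds the regularity $q\in C^{2k-2}$ of \eqref{condq}. The resolution is that the cancellation produced by Lemma~\ref{recu3} removes every term of order $>2k-4$, so that the resulting identity $\frac{d}{dt}Q_k=G_{k-2}'\sp q''$ involves only derivatives of order $\le 2k-3$ and therefore holds under the stated hypothesis; the high-order intermediate expressions serve merely as formal bookkeeping (equivalently, the identity is first verified for smooth $\sp q\sp$ and then persists by its polynomial nature). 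Tracking exactly which terms cancel is the single computation that must be carried out with care.
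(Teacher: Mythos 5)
Your proposal is correct, and it lands on exactly the same primitive
$Q_k = G_{k-2}\sp q'' + \tfrac{4}{C_0}\sp G_1 G_{k-2}/q^2 + \tfrac{1}{C_0}\int G_1 G_{k-3}'''\,dt$
as the paper, with the same endgame (Theorem~\ref{Poly}, Lemma~\ref{clm1} with $j=k-3$ when $k\ge 4$, and a separate treatment of $k=3$ via the Remark following Lemma~\ref{clm1}); but the middle of your argument is genuinely different, and the difference has a cost. The paper stays in differentiated form throughout: from $\beta_{k-2}'q'=(\beta_{k-2}q')'+G_{k-2}'q''$ it applies the product rule once more, writes $G_{k-2}'q''=(G_{k-2}q'')'-(G_{k-2}/q)(q\sp q''')$, and substitutes \eqref{G0bis} and \eqref{Gi/qprimo} at $i=k-2$; every object there ($q'''$ and $G_{k-3}'''$) exists under \eqref{condq}, since $G_{k-3}\in C^4([0,T))$. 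You instead integrate by parts twice and invoke the recursion at the \emph{next} index, i.e.\ \eqref{rec} and \eqref{tt4} at $i=k-1$, which drags in $G_{k-2}'''$ and $G_{k-1}$; as you observe yourself, $G_{k-2}$ is only $C^2$ under \eqref{condq}, so these objects need $q\in C^{2k}$, two orders beyond the hypothesis. Your second patch (verify the identity for smooth $q$, then transfer it) is sound: after chain-rule expansion, $\tfrac{d}{dt}Q_k=G_{k-2}'q''$ is an identity between polynomial expressions in $q,\dots,q^{(2k-3)}$, arbitrary jets with $q\ne 0$ are realized by smooth (e.g.\ polynomial) functions, so validity for smooth $q$ forces the identity of the two polynomials, which then evaluates along any nonvanishing $q\in C^{2k-2}$. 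Be aware, however, that your first formulation of the patch ("the high-order terms are formal bookkeeping, so the identity holds") is not by itself a proof — one cannot push a derivation through undefined intermediate quantities and keep the conclusion merely because the endpoints are defined; the smoothing/polynomial-identity argument is the actual justification and should be the one written out. Alternatively, once your cancellation has revealed the candidate $Q_k$ (whose three pieces are all well defined under \eqref{condq}), you could verify $\tfrac{d}{dt}Q_k=G_{k-2}'q''$ by direct differentiation — but that computation is precisely the paper's proof. In short: your route makes the cancellation of the boundary terms $G_{k-2}'q'-G_{k-2}''\sp q$ against \eqref{tt4} pleasingly mechanical, at the price of an approximation argument; the paper's route is regularity-tight and needs none.
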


\begin{proof} Let us first recall that
$$
\beta_{k-2}= - \spp G_{k-2}'
$$
by \eqref{Siter4bis}.
Then from  \eqref{G0bis} we have:
\begin{equation} \label{start}
\begin{aligned}
\beta_{k-2}'\sp  q' &= \big (\beta_{k-2} \sp q'\big )'- \beta_{k-2}\spp  q''
\\
&= \big (\beta_{k-2}\sp q'\big )'+ \spp G_{k-2}'\spp q''
\\
&= \big (\beta_{k-2}\sp q'\big )'+\spp \big ( G_{k-2} \sp q'' \big )' -  \spp G_{k-2}\spp q''' 
\\
&= \big (\beta_{k-2} \sp q'\big )'+ \spp \big ( G_{k-2}\sp q'' \big )' -
   \left ( \frac { G_{k-2} } q  \right ) \big (q \sp q''' \big )
 \\
&= \big (\beta_{k-2}\sp  q'\big )'+ \spp \big ( G_{k-2}\sp q'' \big )' +
 \frac 4{  C_0}    \left (\frac {G_1} q  \right )' \left ( \frac { G_{k-2} } q  \right ) .
\end{aligned}
\end{equation}

We now first consider the cases $k=3$, where it's enough to apply Theorem~\ref{Poly},
and then face the general case $k\geq 4$, where we also need Lemma~\ref{clm1}.

Let $\sp k=3\sp$ and write \eqref{start} as
\begin{equation} \label{prim3}
\beta_{1}' \sp q' = \big (\beta_{1} q'\big )'+  \frac d {dt}  \left [  \sp  G_{1}\spp q''  +
 \frac 2{   C_0}   {\left (\frac {G_1} {q}  \right )^2}\sp  \right ] .
\end{equation}
By Theorem~\ref{Poly}, we know that $\sp G_1 \sp $ is of the form 
\begin{equation}\label{G11}
G_1= q \spp \Pcal_1 \left (q,q',q'' \sp \right )\sp ,
\end{equation}
with $\sp \Pcal_1\sp$ a suitable polynomial of its arguments. Thus, from the expression \eqref{prim3}, it is clear that formula \eqref{prim} holds 
when $\spp k=3\sp$.

If $\spp k \ge 4\spp$, we continue from the last expression of \eqref{start} by writing
\begin{equation}\label{gener}
\begin{aligned}
\beta_{k-2}' \sp q' &= \big (\beta_{k-2}\spp q'\big )'+  \spp \big ( G_{k-2}\sp  q'' \big )' +
 \frac 4{   C_0} \left ( \frac {G_1 \sp G_{k-2}} {q^2}  \right )' 
 -  \frac 4{   C_0}    \left (\frac {G_1} q  \right ) \left ( \frac { G_{k-2} } q  \right )'
 \\ 
  &= \big (\beta_{k-2}\spp q'\big )'+  \big ( G_{k-2}\sp  q'' \big )' +
 \frac 4{   C_0} \left ( \frac {G_1 \sp G_{k-2}} {q^2}  \right )' 
 +  \frac 1  { C_0}   \spp   {G_1} \spp  G_{k-3}''' \, ,
 \end{aligned}
\end{equation}
because of \eqref{Gi/qprimo} for $i=k-2$.

 By Theorem~\ref{Poly}, it follows that the term
$$
 \big ( G_{k-2}\sp  q'' \big ) +
 \frac 4{  C_0} \left ( \frac {G_1 \sp G_{k-2}} {q^2}  \right ) 
 $$
 is a polynomial in $ \sp q, q', \dots, q^{(2k-4)}\sp $. So, it only remains to show that
 $\sp {G_1} \sp  G_{k-3}''' \sp $ is the derivative of a polynomial in $ \sp q, q', \dots, q^{(2k-4)}\sp $.
This follows from the fact that 
\begin{equation*}
\int G_1 \sp G_{k-3}''' \spp  dt 
\end{equation*}
is  a polynomial in $\sp q\sp$ and its derivatives of order $\sp \le 2k-4 \sp $. To prove this, it is sufficient to 
apply Lemma~\ref{clm1} with $\sp j = k-3\sp$. In fact, $\sp k-3 \ge 1\sp$, because we are assuming $\sp k \ge 4\sp$, and 
 condition  \eqref{exprebis} is  clearly guaranteed by Theorem~\ref{Poly}.
\end{proof}


\vskip 0.2cm
\section{Proof of Theorem~\ref{T1}} \label{concl}

\vskip 3mm
After having proved Lemma~\ref{primit}, it is now easy to prove Theorem~\ref{T1}. 
To begin with, we set $\sp C_0= 1\sp $ in \eqref{c0}. That is,  we choose  
\begin{equation}
\label{newalpha0}
\alpha_0 = q \sp, \quad  \; \beta_0 = -q' \sp , \quad \; \gamma_0 = - \frac 1 2 \sp q^2 \sp q'' \sp .
\end{equation}
From \eqref{derE} and Lemma~\ref{primit} we see that  the functional
\begin{equation*} 
I_k= E_k - \frac 1 {2\sp a} \beta_{k-2}\sp  q'  - \frac 1 {2\sp a } \sp Q_k \!
\left (q, \sp q', \dots, \sp q^{(2k-4)} \right )
\end{equation*}
is invariant by time evolution in $\sp [0,T)\sp $.

Taking into account \eqref{newEk} and \eqref{Q}, and noting that 
$$
\beta_{k-2} \int_{\R^n} |\xi|^2\Re (\overline{w} w_t) d\xi \,  =  \,
\frac {1}{2\sp a }\spi \beta_{k-2} \spi q' 
$$
we can express 
$\sp I_k \sp $ as
\begin{equation}
 \begin{aligned}\label{IK}
I_k \, :=  \;  &\alpha_0 \int_{\R^n} |\xi|^{2k-2} \left(|w_t|^2  + \frac 1{q^2}|\xi|^2  |w|^2\right )  \sp d\xi
+  \beta_0  \int_{\R^n}|\xi|^{2k-2}  \Re  (\overline{ w} \sp w_t) \sp  d\xi 
\\
 &
 +\sum_{i=1}^{k-2}
 \alpha_i \int_{\R^n} |\xi|^{2k-2i-2} \left(|w_t|^2  + \frac 1{q^2}|\xi|^2  |w|^2\right )  \sp d\xi
 \\
 &
+\sum_{i=1}^{k-3}  \beta_i  \int_{\R^n}|\xi|^{2k-2i-2}  \Re  (\overline{ w} \sp w_t) \sp  d\xi 
 + \sum_{i=0}^{k-3} \gamma_i  \int_{\R^n} |\xi|^{2k -2i-4} |w_t|^2 \sp d \xi
\\
& 
- \frac 1 {2\sp a } \spp Q_k \!
\left (q, \sp q', \dots, \sp q^{(2k-4)} \right ),
 \end{aligned}
\end{equation}
where the second summation of \eqref{IK}, that is,
 $ \, \sum_{i=1}^{k-3}  \beta_i  \int_{\R^n}|\xi|^{2k-2i-2}  \Re  (\overline{ w} \sp w_t) \sp  d\xi 
 \, $, is empty if $\sp k = 3\sp $. By Plancharel theorem, taking into account \eqref{newalpha0},
 we can also write:
 
 \begin{equation}
 \begin{aligned}\label{IK2}
I_k \, :=  \;  &  q {\|\nabla^{k-1} u_t\|^2} + \frac{\|\nabla^{k} u\|^2} q -
 {q'} \!\int_{\R^n} \nabla^{k-1} u  \cdot \!\nabla^{k-1} u_t\spp\sp dx 
\\
 &
 +\sum_{i=1}^{k-2}
 \alpha_i\left(    {\|\nabla^{k-i-1} u_t\|^2} + \frac{\|\nabla^{k-i} u\|^2} {q^2}  \right ) 
+\sum_{i=1}^{k-3}  \beta_i  \int_{\R^n} \nabla^{k-i-1} u  \cdot \!\nabla^{k-i-1} u_t\spp\sp dx 
\\ 
& + \sum_{i=0}^{k-3} \gamma_i \spp  \| \nabla^{k-i-2} u_t\|^2 
- \frac 1 {2\sp a } \spp Q_k \!
\left (q, \sp q', \dots, \sp q^{(2k-4)} \right ),
 \end{aligned}
\end{equation}
where 
\begin{equation}
 \begin{aligned}\label{IK3}
J_k \, :=  \; 
 &
 \sum_{i=1}^{k-2}
 \alpha_i\left(    {\|\nabla^{k-i-1} u_t\|^2} + \frac{\|\nabla^{k-i} u\|^2}{ q^2}  \right ) 
+\sum_{i=1}^{k-3}  \beta_i  \int_{\R^n} \nabla^{k-i-1} u  \cdot \!\nabla^{k-i-1} u_t\spp\sp dx 
\\ 
& + \sum_{i=0}^{k-3} \gamma_i \spp  \| \nabla^{k-i-2} u_t\|^2 
- \frac 1 {2\sp a } \spp Q_k \!
\left (q, \sp q', \dots, \sp q^{(2k-4)} \right )
 \end{aligned}
\end{equation}
 is a functional of order $\sp \le k-1\sp$, in the sense of Definition~\ref{lf}.

 In fact, by Corollary~\ref{coro}, we know that $\sp \alpha_i\sp $, $ \sp \beta_i\sp $ and $ \sp \gamma_i\sp $ are polynomials 
 in $\sp q \sp $ and its derivatives $\sp q^{(h)}\sp $ of order $\sp h\le 2i\sp$, $\sp h\le 2i+1\sp$  and 
 $\sp h\le 2i+2 \sp$, 
 respectively. Therefore, the maximum order of the derivatives of $\sp q\sp$ in the  expression \eqref{IK3} is 
 $ \spp \le 2k-4\sp $ and,  taking into account Remark~\ref{lorder}, we  deduce that $\sp J_k\sp$ is a functional of order 
 $\sp \le k-1\sp$.
 
 This concludes the proof of Theorem~\ref{T1}.
 
 
\vspace{4mm}
{\bf Acknowledgments.}
The first author is member of the Gruppo Nazionale per l'Analisi Ma\-te\-ma\-ti\-ca, la
Probabilit\`a e le loro Applicazioni (GNAMPA) of the Istituto Nazionale di Alta Ma\-te\-ma\-ti\-ca (INdAM) and was partially supported by the Italian Ministry of University and Research, under PRIN2022 (Scorrimento) ``Anomalies in partial differential equations and applications", code: 2022HCLAZ8\_002, CUP: J53C24002560006.


\end{document}